\numberwithin{equation}{section}
\theoremstyle{plain}
\newtheorem{thm}{Theorem}[section]
\newtheorem{defi}[thm]{Definition}
\theoremstyle{definition}
\theoremstyle{remark}
\newtheorem{rem}[thm]{Remark}
\newcommand{\R}{\mathbb{R}}
\newcommand{\J}{\mathcal{J}}
\newcommand\N{{\mathbb N}}
\newcommand\pref[1]{(\ref{#1})}
\let \eps\varepsilon
\DeclareMathOperator{\argmin}{argmin}
\def\<#1,#2>{\left<#1,#2\right>}
\def\PP{{\cal P}}
\newcommand\gammab{{\overline{\gamma}}}
\newcommand\KL{{\mathrm{KL}}}
\newcommand\MK{{\mathrm{MK}}}
\newcommand\MKb{\overline{{\mathrm{MK}}}}
\newcommand\K{{\mathrm{K}}}
\newcommand\Cm{{\mathrm{C}}_\mu}
\newcommand\Cn{{\mathrm{C}}^\nu}
\newcommand\prox{{\mathrm{prox}}}
\title {Computation of Cournot-Nash equilibria by entropic regularization}
\author {Adrien Blanchet \thanks{\scriptsize  GREMAQ-TSE, Universit\'e de Toulouse, Manufacture des Tabacs
21 all\'ee de Brienne, 31000 Toulouse, FRANCE 
\texttt{Adrien.Blanchet@univ-tlse1.fr}.} \;,\; Guillaume Carlier \thanks{\scriptsize Universit\'e Paris-Dauphine, PSL Research University, CNRS, CEREMADE, 75016, Paris, France and Inria-Paris, MOKAPLAN
\texttt{carlier@ceremade.dauphine.fr}.} \;, \;  Luca Nenna \thanks{\scriptsize Inria-Paris, MOKAPLAN,  2, rue Simone Iff, 75012, Paris, France and CEREMADE, UMR CNRS 7534, Universit\'e Paris IX
Dauphine  \texttt{luca.nenna@inria.fr}.}}
\begin{document}

\maketitle

\begin{abstract}
We consider a class of games with continuum of players where equilibria can be obtained by the minimization of a certain functional related to optimal transport as emphasized in \cite{abgcmor}. We then use the powerful entropic regularization technique to approximate the problem and solve it numerically in  various cases. We also consider the extension to some models with several populations of players. 

\end{abstract}

\textbf{Keywords:} Cournot-Nash equilibria, optimal transport, entropic regularization.\\

\textbf{AMS Subject Classifications:}.


\section{Introduction}\label{sec-intro}

There is a long tradition in economics and game theory, since the seminal works of Aumann \cite{Aumann2}, \cite{Aumann}, of considering equilibria in games with a continuum of players, each of whom having a negligible influence on the output of the others.  In particular, Schmeidler \cite{Schmeidler} introduced a notion of non-cooperative equilibrium in games with a continuum of agents and,  Mas-Colell \cite{MasColell} reformulated Schmeidler's analysis in terms of joint distributions over players' actions and characteristics and emphasized the concept of Cournot-Nash equilibrium distributions. There are many examples where such concepts are relevant such as strategic route use in road traffic or networks, social interactions....

\smallskip

The problem can be described as follows: heterogeneous players each have to choose a strategy (or a probability over strategies, i.e. mixed strategies are allowed) so as to minimize a cost, the latter depending on the choice of the whole population of players only through the distribution of their strategies. In other words, on the one hand, each player,  has a negligible influence on the cost. On the other hand, the interactions between players are of mean-field type: it does not matter who plays such and such strategy but rather how many players  chose it. There are  different   mean-field  effects, of different nature and which can be either repulsive (i.e. favoring  dispersion of strategies) or attractive (favoring concentration of strategies). Congestion (the cost of a strategy is higher if it is frequently played) is a typical example of dispersive effect.  In realistic models however, there are also attractive effects: choosing a strategy which is "far" from the strategies played by the other players may be risky or result in some cost.. It should then come as no surprise that, due to such opposite effects,  the analysis of equilibria is complex in general. This explains why, in general, one cannot go much further than proving an existence result, as for instance, following the very elegant approach of Mas-Colell \cite{MasColell}.

\smallskip

More recently, the first two authors \cite{abgcmor} (also see \cite{abgcptrl}) emphasized the fact that for a separable class of costs, Cournot-Nash equilibria can be obtained by the minimization of a certain functional on the set of measures on the space of strategies. This functional typically involves two terms: an optimal transport cost and a more standard integral functional which may capture both congestion and attractive effects (as in \cite{lw1}). Interestingly, this kind of minimization problem is very close to the semi-implicit Euler scheme introduced in the seminal work of \cite{JKO1998} for Wasserstein gradient flows (for which we refer to \cite{AmbrosioGradientFlows2008}).

\smallskip

 The variational approach of  \cite{abgcmor} is somehow more constructive and informative (but less general since it requires a separable cost) than the one relying on fixed-point arguments as in \cite{MasColell} but the optimal transport term cost is delicate to handle. It is indeed costly in general to solve an optimal transport problem and compute an element of the subdifferential of the optimal cost as a function of its marginals. In recent years, however it has been realized that a powerful way to approximate optimal transport is by adding an entropic penalization term. Doing so, the problem becomes projecting for  the Kullback-Leibler divergence a given joint measure on the set of measures with fixed marginals, a task that can be achieved very efficiently by alternate projections (see e.g. Bauschke and Lewis \cite{bauschke-lewis}, Dysktra \cite{Dykstra83}) as shown by Cuturi \cite{CuturiSinkhorn}. This powerful method is intimately related to Sinkhorn algorithm and the Iterated Proportional Fitting Procedure (IPFP), well-known to statisticians and  recently remise au go\^ut du jour by Galichon and Salani\'e \cite{Galichon-Entropic} for the estimation of matching models (we also refer to the recent book of Galichon \cite{galichonbook} for a broader perspective on optimal transport methods, with or without regularization, in economics and econometrics).  Various applications of the IPFP/Sinkhorn algorithm  to optimal transport can be found in \cite{benamouetalentropic}. In order to take advantage of the power of entropic regularization on Wasserstein gradient flows, Peyr\'e \cite{peyregradientflows} introduced an extension of Dykstra's algorithm which he called Dykstra proximal splitting.  It turns out that Peyr\'e's algorithm, recently extended by Chizat et al. \cite{Chizat}, is perfectly well-suited to the computation of Cournot-Nash equilibria as we try to explain in the sequel of the paper and illustrate by various numerical examples. We would also like to emphasize that, in the context of Cournot-Nash equilibrium, entropic regularization is also natural from a theoretical point, it amounts to replace exact cost minimization by some Gibbs-like measure or, equivalently to assume that the cost involves some random term.


\smallskip

The paper is organized as follows. In section \ref{sec-cn}, we recall the concept of Cournot-Nash equilibria, its variational counterpart and the entropic regularization of the latter. In Section \ref{sec-algo}, we describe the proximal  splitting algorithm and the semi-implicit approach. In section \ref{sec-num-res}, we present various numerical results both in dimension one and two and emphasize the influence of the transport cost on the structure of equilibria. Section \ref{sec-several-pop} extends the previous analysis to some models with several populations for which we also present numerical results.


\section{Cournot-Nash  and entropic Cournot-Nash equilibria}\label{sec-cn}

We will restrict ourselves here to the following \emph{finite} Cournot-Nash setting. Not only this will simplify the exposition and enable us to give a simple and self-contained exposition of the variational approach but this will also be consistent with our numerical scheme which anyway considers a finite number of agents' types and a finite number of strategies. We refer to \cite{abgcmor} for the analysis of the continuum case. We consider a population of players, each of whom is characterized by a type which takes values in the type set $X:=\{x_i\}_{i\in I}$ where $I$ is finite. The frequencies of the players' type in the population is given by a probability $\mu:=\{\mu_i\}_{i\in I}$ with $\mu_i\ge 0$ and $\sum_{i=1}^N \mu_i=1$. Each agent has to choose a strategy $y$ from the strategy set $Y:=\{y_j\}_{j\in J}$ with $J$ finite.  The unknown of the problem is a matrix $\gamma :=\{\gamma_{ij}\}_{i\in I, \; j\in J}$ where $\gamma_{ij}$ is the probability that a player of type $x_i$ chooses strategy $y_j$, there is an obvious feasibility constraint on this matrix, obviously it should have nonnegative entries and its first \emph{marginal} should match the given distribution of players $\mu$ i.e.:
\begin{equation}\label{marginalmu}
\sum_{j\in J} \gamma_{ij} =\mu_i,\; \forall i\in I.
\end{equation} 
The matrix $\gamma$ induces a probability $\nu=\Lambda_2(\gamma)=\{\nu_j\}_{j\in J}$ on the set of strategies given by its \emph{second marginal}:
\begin{equation}\label{defnu}
\nu_j :=\sum_{i\in I} \gamma_{ij}, \; \forall j\in J.
\end{equation}
Agents of type $x_i$ who play strategy $y_j$ incur a cost that not only depends on $x_i$ and $y_j$ but also on the whole probability $\nu:=\{\nu_j\}_{j\in J}$ on the strategy space induced by the behavior of the whole population of players, and we denote this cost by $\Psi_{ij}[\nu]$. An equilibrium is then a probability matrix $\gamma$ which is feasible and which is consistent with the cost minimizing behavior of players, which is summarized in the next definition:

\begin{defi}
A Cournot-Nash equilibrium is a matrix $\gamma=\{\gamma_{ij}\}_{i\in I, \; j\in J} \in \R_+^{I\times J}$ which satisfies the feasibility constraint \pref{marginalmu} and such that, defining the strategy marginal $\nu=\Lambda_2(\gamma)$ by \pref{defnu}, one has 
\[ \gamma_{ij}>0 \Rightarrow \Psi_{ij} [\nu]=\min_{k\in J} \Psi_{ik}[\nu].\] 
\end{defi}

Provided  $\Psi_{ij}$ depends continuously on $\nu$, the existence of an equilibrium can easily be proven by Kakutanis' fixed-point theorem, but not much more can be said, at this level of generality. If one further specifies the form of the cost, as we shall do now, following \cite{abgcmor}, one may obtain equilibria by minimizing a certain cost functional.

\subsection{A variational approach to Cournot-Nash equilibria}

We now suppose that the cost $\Psi_{ij}[\nu]$ takes the following separable form
\[\Psi_{ij}[\nu]:=c_{ij}+ f_j(\nu_j)+ \sum_{k\in J} \phi_{kj} \nu_k\]
where $c:=\{c_{ij}\}_{i\in I, \; j\in J} \in \R^{I\times J}$, each function $f_j$ is nondecreasing and continuous, the matrix $\phi:= \{\phi_{kj}\}\in \R^{J\times J}$ is symmetric, i.e. $\phi_{kj}=\phi_{jk}$, 

A possible interpretation of this model is the following: the players represent a population of doctors, their type $x$ represent their region of origin and their $y$ strategy represent the location where they chose to dwell, the total cost of $x_i$-type doctors  is the sum of
\begin{itemize}
\item a transport cost $c_{ij}=c(x_i, y_j)$, 
\item a congestion cost $f_j(\nu_j)$: if location $y_j$ is very crowded i.e. if $\nu_j$ is large, the doctors settling at $y_j$ will see their benefit decrease,
\item an interaction cost with the rest of the population of doctors, one can think that  $\phi_{kj}$ is an increasing function of some distance between $y_k$ and $y_j$ so that $ \sum_{k\in J} \phi_{kj} \nu_k$ represents the average distance to the rest of the population. 
\end{itemize}

The variational approach of \cite{abgcmor} relies on optimal transport, and we shall give a self-contained and simple presentation in the present discrete setting. Firstly it is useful to introduce the marginal maps:
\[\gamma \in \R^{I\times J} \mapsto \Lambda_1(\gamma) =\alpha \in \R^I, \; \alpha_i:= \sum_{j\in J} \gamma_{ij},\]
and
\[\gamma \in \R^{I\times J} \mapsto \Lambda_2(\gamma) =\nu \in \R^J, \; \nu_j:= \sum_{i\in I} \gamma_{ij},\]
as well as
\[\Cm:=\{\gamma=\{\gamma_{ij}\}_{i\in I, \; j\in J} \in \R_+^{I\times J} \; : \; \Lambda_1(\gamma)=\mu\}\]
which is the set of probabilities on $X\times Y$ having $\mu$ as first marginal (recall that $\mu$ is fixed).  For  $\nu=\{\nu_j\}_{j\in J}\in \R_+^J$ such that $\sum_{j\in J} \nu_j=1$, let us also define
\[\Cn:=\{\gamma=\{\gamma_{ij}\}_{i\in I, \; j\in J} \in \R_+^{I\times J} \; : \; \Lambda_2(\gamma)=\nu\}\]
as the set of probabilities on $X\times Y$ having $\nu$ as second marginal. Let us then also define the set of transport plans between $\mu$ and $\nu$ as
\begin{equation}\label{defiplans}
\Pi(\mu, \nu):=\Cm\cap \Cn.
\end{equation}
Given $\nu$ a probability on $Y$, let us define 
\begin{equation}\label{defmk}
\MK(\nu):=\inf_{\gamma \in \Pi(\mu, \nu)} \Big\{ c \cdot \gamma:=\sum_{i,j\in I\times J} c_{ij} \gamma_{ij}\Big\}
\end{equation}
that is the value of the optimal transport problem between $\mu$ and $\nu$ for the cost $c$. Setting
\[\PP(Y):=\{\nu\in \R_+^J \; : \; \sum_{j\in J} \nu_j=1\}\]
consider the optimization problem
\begin{equation}\label{minicn}
\inf_{\nu\in \PP(Y)} \MK(\nu)+E(\nu)
\end{equation}
where the energy $E$ is given by
\begin{equation}\label{defidee}
E(\nu):=\sum_{j\in J} F_j(\nu_j)+\frac{1}{2} \sum_{k,j\in J\times J} \phi_{kj} \nu_k \nu_j
\end{equation}
and $F_j$ is a primitive of the congestion function $f_j$:
\[F_j(t):=\int_0^t f_j(s) \mbox{d}s.\]

We then have 

\begin{thm}
Let $\nu$ solve \pref{minicn} and  $\gamma \in \Pi(\mu, \nu)$ be such that $c\cdot \gamma =\MK(\nu)$, then $\gamma$ is a Cournot-Nash equilibrium. This implies in particular that there exists Cournot-Nash equilibria. 

\end{thm}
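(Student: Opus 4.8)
The plan is to observe that the two conditions defining a Cournot--Nash equilibrium decouple: feasibility is automatic, and only the support (cost-minimization) condition requires work. Since $\gamma\in\Pi(\mu,\nu)\subset\Cm$ we immediately have $\gamma\in\R_+^{I\times J}$ and $\Lambda_1(\gamma)=\mu$, i.e. \pref{marginalmu} holds, so it remains to show that $\gamma_{ij}>0$ forces $\Psi_{ij}[\nu]=\min_{k\in J}\Psi_{ik}[\nu]$. The first thing I would do is compute the gradient of the energy \pref{defidee}: using $F_j'=f_j$ and the symmetry $\phi_{kj}=\phi_{jk}$, one gets $\partial_{\nu_j}E(\nu)=f_j(\nu_j)+\sum_{k\in J}\phi_{kj}\nu_k$, so that the equilibrium cost is exactly $\Psi_{ij}[\nu]=c_{ij}+\partial_{\nu_j}E(\nu)$. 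This identity is what ties the variational problem \pref{minicn} to the definition of equilibrium.

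The core of the argument is a one-sided first variation computed along a mass-transfer perturbation, exploiting crucially that $\nu$ is optimal for \pref{minicn}. Fix $(i,j)$ with $\gamma_{ij}>0$ and an arbitrary $k\in J$. For $\epsilon\in[0,\gamma_{ij}]$ let $\gamma^\epsilon$ coincide with $\gamma$ except that $\gamma^\epsilon_{ij}=\gamma_{ij}-\epsilon$ and $\gamma^\epsilon_{ik}=\gamma_{ik}+\epsilon$; this keeps the first marginal equal to $\mu$, so $\gamma^\epsilon\in\Pi(\mu,\nu^\epsilon)$, where $\nu^\epsilon:=\Lambda_2(\gamma^\epsilon)$ differs from $\nu$ only in $\nu^\epsilon_j=\nu_j-\epsilon$ and $\nu^\epsilon_k=\nu_k+\epsilon$ and lies in $\PP(Y)$ for small $\epsilon$ (here I use $\nu_j>0$, guaranteed by $\gamma_{ij}>0$). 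The key point, which avoids ever differentiating the nonsmooth map $\MK$, is that $\gamma^\epsilon$ is an admissible competitor for the transport problem \pref{defmk} defining $\MK(\nu^\epsilon)$, whence
\[\MK(\nu^\epsilon)\le c\cdot\gamma^\epsilon=\MK(\nu)+\epsilon\,(c_{ik}-c_{ij}).\]
Since $E$ is differentiable (this is where continuity of each $f_j$ enters), expanding the energy gives $E(\nu^\epsilon)=E(\nu)+\epsilon\,\big(\partial_{\nu_k}E(\nu)-\partial_{\nu_j}E(\nu)\big)+o(\epsilon)$.

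Combining the two estimates and using the optimality of $\nu$ in \pref{minicn}, for which $\nu^\epsilon$ is an admissible competitor, I would write
\[0\le \big(\MK(\nu^\epsilon)+E(\nu^\epsilon)\big)-\big(\MK(\nu)+E(\nu)\big)\le \epsilon\,\big(\Psi_{ik}[\nu]-\Psi_{ij}[\nu]\big)+o(\epsilon),\]
where the last step invokes the identity $\Psi_{ij}[\nu]=c_{ij}+\partial_{\nu_j}E(\nu)$ from the first step. Dividing by $\epsilon>0$ and letting $\epsilon\to0^+$ yields $\Psi_{ij}[\nu]\le\Psi_{ik}[\nu]$; as $k$ is arbitrary, this is precisely the equilibrium condition. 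The existence claim then follows by noting that $\MK+E$ is continuous on the compact simplex $\PP(Y)$ (with $\MK(\nu)$ finite, since $\gamma_{ij}=\mu_i\nu_j$ is always admissible), so \pref{minicn} attains its minimum, and the finite linear program \pref{defmk} admits an optimal $\gamma$. I do not anticipate a genuine obstacle; the points to watch are the \emph{asymmetric} role of $j$ and $k$ in the perturbation (mass can only be removed where $\gamma_{ij}>0$, which is exactly what orients the inequality correctly) and the validity of the one-sided expansion of $E$ even when $\nu_k=0$, which is fine because $f_k(0)$ is finite.
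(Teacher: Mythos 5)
Your proof is correct, but it takes a genuinely different route from the paper's. The paper argues through convex duality: it introduces the Kantorovich functional $\K(v)=-\sum_i\min_j(c_{ij}-v_j)\mu_i$, identifies $\MKb=\K^*$, writes the optimality condition at the minimizer as $-\nabla E(\nu)\in\partial\MKb(\nu)$, i.e. $\nu\in\partial\K(v)$ with $v=-\nabla E(\nu)$, and then reads off the equilibrium condition from the resulting identity $c\cdot\gamma=\sum_{i,j}(u_i+v_j)\gamma_{ij}$ together with $u_i+v_j\le c_{ij}$ (complementary slackness). You instead perform a primal first-variation argument: transfer $\epsilon$ units of mass of type $x_i$ from strategy $y_j$ to strategy $y_k$, use the perturbed plan as a competitor for $\MK(\nu^\epsilon)$ to get a one-sided bound without ever differentiating $\MK$, expand the smooth energy $E$, and invoke optimality of $\nu$. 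Both are sound; your key observations (the asymmetry of the perturbation, which requires $\gamma_{ij}>0$ and orients the inequality, and the identity $\Psi_{ij}[\nu]=c_{ij}+\partial_{\nu_j}E(\nu)$ via the symmetry of $\phi$) are exactly the right ones. What the duality route buys is the explicit pair of potentials $(u,v)$ --- the $u_i$ are the equilibrium values of the players and the argument transfers most directly to the continuum setting of the cited reference, where pointwise mass transfers are more delicate; what your route buys is a completely elementary, self-contained argument that needs no Kantorovich duality, no conjugacy computation, and no subdifferential calculus for the sum of a convex and a smooth function. Your treatment of the existence claim (continuity of $\MK+E$ on the compact simplex, finiteness via the product plan, attainment in the finite linear program) matches the paper's closing remark.
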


\begin{proof}
We have to prove that whenever $\gamma_{ij}>0$ one has
\begin{equation}\label{skonveut}
c_{ij} +f_j(\nu_j)+\sum_{k\in j} \phi_{kj} \nu_k = u_i
\end{equation}
with
\[u_i:=\min_{j\in J} \{c_{ij} +f_j(\nu_j)+\sum_{k\in j} \phi_{kj} \nu_k \}.\]
First observe that $E$ is of class $C^1$ and by construction
\begin{equation}\label{nablaE}
\frac{\partial E} {\partial \nu_j} = f_j(\nu_j)+\sum_{k\in j} \phi_{kj} \nu_k.
\end{equation}
To treat the transport term, $\MK$, we shall recall the classical Kantorovich duality (see \cite{Villani-TOT2003}, \cite{santambook}) as follows. Firstly for $v\in \R^J$ let us define
\[\K(v):=-\sum_{i\in I} \min_{j\in J} (c_{ij}-v_j) \mu_i\]
note that $\K$ is a convex and Lipschitz function whose conjugate, thanks to Kantorovich duality, can be expressed as
\[K^*(\nu)=\MKb(\nu):=\begin{cases} \MK(\nu) \mbox{ if $\nu \in \PP(Y),$}\\ +\infty \mbox{ otherwise.}   \end{cases}\]
Since $\nu$ minimizes $\MKb+E$, one has $0\in \partial \MK(\nu)+\nabla E(\nu)$, setting $v:=-\nabla E(\nu)$, this can be rewritten as $\nu \in \partial \MKb^*(v)=\partial K(v)$ and since $\MKb(\nu)=c\cdot \gamma$ this gives
\[\begin{split}
\MK(\nu)= c\cdot \gamma&= \sum_{j\in \J} v_j \nu_j + \sum_{i\in I} \min_{j\in J} (c_{ij}-v_j) \mu_i\\
&=  \sum_{j\in \J} v_j \nu_j + \sum_{i\in I} u_i \mu_i= \sum_{i,j\in I\times J} (u_i+v_j) \gamma_{ij}.
\end{split}\] 
which, since $u_i+v_j \le c_{ij}$ implies that whenever $\gamma_{ij}>0$, one has $c_{ij}-v_j=u_i$  which is exactly \pref{skonveut}.  This clearly implies the existence of Cournot-Nash equilibria since $\PP(Y)$ is compact and both $\MK$ and $E$ are continuous.
\end{proof}

Note that  if $E$ is convex then the optimality condition $0\in \partial \MKb(\nu)+\nabla E(\nu)$ is necessary and sufficient and there is actually an equivalence between being an equilibrium and being a minimizer in this case.  

\subsection{Entropic regularization}\label{sec-entropic}

Solving \pref{minicn} in practice (even if $E$ is convex) might be difficult because of the transport cost term $\MK$ for which it  is expensive to compute a subgradient. There is however a simple regularization of $\MK$ which is much more convenient to handle: the entropic regularization (see \cite{benamouetalentropic,CuturiSinkhorn,Galichon-Entropic}). Given a regularization parameter $\eps>0$, let us define for every $\nu \in \PP(Y)$:
\[\MK_\eps(\nu):=\inf_{\gamma \in \Pi(\mu, \nu)}  \Big\{ c\cdot \gamma + \eps \sum_{i,j\in I\times J} \gamma_{ij} (\ln(\gamma_{ij})-1)\Big\}.\]
 We then consider the regularization of \pref{minicn} 
\begin{equation}\label{minicne}
\inf_{\nu\in \PP(Y)} \MK_\eps(\nu)+E(\nu)
\end{equation}  
where $E$ is again given by \pref{defidee}. Thanks to the entropic regularization term, \pref{minicne} is a smooth minimization problem which consists in minimizing with respect to $\gamma$ and $\nu$ the objective
\[  c\cdot \gamma + \eps \sum_{i,j\in I\times J} \gamma_{ij} (\ln(\gamma_{ij})-1)+E(\nu)\]
subject to $\gamma_{ij}\ge 0$ (but because of the entropy, these nonnegativity constraints are not binding) and the linear marginal constraints $\gamma \in \Pi(\mu, \nu)$. The first-order optimality conditions give the following Gibbs form for $\gamma_{ij}$:
\begin{equation}\label{gibbs}
\gamma_{ij}=a_i\exp\Big(-\frac{1}{\eps}(c_{ij}+f_j(\nu_j)+\sum_{k\in J} \phi_{kj} \nu_k)\Big) 
\end{equation}
for some $a_i>0$ which has to fulfill the first marginal constraint i.e.
\[a_i= \frac{\mu_i} {\sum_{j\in J} \exp\Big(-\frac{1}{\eps}(c_{ij}+f_j(\nu_j)+\sum_{k\in J} \phi_{kj} \nu_k)\Big) }.\]
Note that these conditions can also be interpreted as a regularized form of a Cournot-Nash equilibrium since they mean that the conditional probabilities on the set of strategies given the players type $\{\frac{\gamma_{ij}}{\mu_i}\}_{j\in J}$ are proportional to $\exp(-\frac{\Psi_{ij}(\nu)}{\eps})$ where $\Psi_{ij}[\nu]=c_{ij}+ f_j(\nu_j)+ \sum_{k\in J} \phi_{kj} \nu_k$ is the total cost incurred by players $x_i$ when choosing strategy $y_j$. Another  equilibrium interpretation (which is customary in economics and econometrics in the framework of discrete choice models) is to consider that the total cost actually contains a random component that is of the form $\eps X_{ij}$ where the $X_{ij}$ are i.i.d. logistic random variables (see \cite{galichonbook}). 

Of course, again when $E$ is convex, since $\MKb_\eps$ is strictly convex, there is a unique minimizer and  the first-order optimality condition for \pref{minicne} is necessary and sufficient so that there is again  equivalence between being a minimizer and a (regularized) Cournot-Nash equilibrium.

\section{A proximal splitting algorithm}\label{sec-algo}

To solve \pref{minicne}, we shall use a proximal splitting scheme using the Kullback-Leibler divergence that was recently introduced by Peyr\'e \cite{peyregradientflows} in  the context of entropic regularization of Wasserstein gradient flows and extended recently by Chizat et al. \cite{Chizat}. First, let us observe that \pref{minicne} can be rewritten as a special instance of a Bregman proximal problem. To see this, let us first rewrite
\[c\cdot \gamma +  \eps \sum_{i,j\in I\times J} \gamma_{ij} (\ln(\gamma_{ij})-1)=  \eps \sum_{i,j\in I\times J} \gamma_{ij}( \ln\Big ( \frac{\gamma_{ij}}{e^{-\frac{c_{ij}}{\eps}}} \Big)-1)\]
which is the same as $\eps \KL(\gamma \vert \gammab)$ where $\gammab_{ij}=e^{-\frac{c_{ij}}{\eps}}$ and $\KL$ is the Kullback-Leibler divergence
\[\KL(\gamma \vert \theta):=  \sum_{i,j\in I\times J} \gamma_{ij}\Big( \ln\Big ( \frac{\gamma_{ij}}  {\theta_{ij}}\Big)-1   \Big), \; \gamma\in \R_+^{I\times J}, \; \theta \in \R_+^{I\times J}.\]
Note that $\KL$ is the Bregman divergence associated to the entropy. Solving \pref{minicne} then amounts to the proximal problem
\begin{equation}\label{proxE}
\prox^{\KL}_G(\gammab)=\argmin_{\gamma \in \R_+^{I\times J} } \Big\{ \KL(\gamma \vert \gammab)+ G(\gamma)\Big\}
\end{equation}
with
\[G(\gamma):=\chi_{\{\Lambda_1 (\gamma) =\mu\}}+\frac{1}{\eps} E(\Lambda_2(\gamma)). \]
Computing directly $\prox^{\KL}_G(\gammab)$ may be an involved task, but the idea of Peyr\'e's splitting algorithm is to express $G$ as a sum of more elementary functionals:
\[G:=\sum_{l=1}^L G_l\]
each of whom being simple in the sense that computing $\prox^{\KL}_{G_l}$ can be done easily (ideally in close form). The algorithm proposed by Peyr\'e generalizes Dykstras' algorithm for $\KL$ projections on the intersection of convex sets and can be described as follows. First extend the sequence of functions $G_1, \cdots, G_L$  by periodicity:
\[G_{l+nL}=G_l, \; l=\{1, \cdots, L\}, \; n \in \N\]
initialize the algorithm by setting the following values for the $I\times J$ matrices
\[\gamma^{(0)}=\gammab, \; z^{(0)}=z^{(-1)}=\cdots =z^{(-L+1)}=e, \; e_{ij}=1, \; (i,j)\in I\times J,\]
and then iteratively define for $n\ge 1$
\begin{equation}\label{dykitergamma}
\gamma^{(n)}=\prox^{\KL}_{G_n}\Big( \gamma^{n-1} \odot z^{(n-L)}\Big) 
\end{equation}
and
\begin{equation}\label{dykiterz}
z^{(n)}= z^{(n-1)} \odot \Big( \gamma^{(n-1)} \oslash \gamma^{(n)}  \Big)
\end{equation}
where $\odot$ and $\oslash$ stand for entry-wise multiplication/division operations:
\[(\gamma \odot \theta)_{ij}=\gamma_{ij}\theta_{ij}, \; (\gamma \oslash \theta)_{ij}=\frac{ \gamma_{ij}}{\theta_{ij}}.\]
We refer to \cite{peyregradientflows} and \cite{Chizat} for the convergence of this algorithm under suitable assumptions (convexity of the functions $G_l$ and a certain qualification condition), the idea being that at the level of the dual problem, which is smooth, this algorithm amounts to perform an alternate block minimization.

\subsection{A class of convex problems}

Note that the congestion term $\sum_{j\in J} F_j(\nu_j)$ is convex because $f_j$ is nondecreasing, but the quadratic  interaction energy $\nu\mapsto \sum_{j,k\in J\times J} \phi_{kj} \nu_k \nu_j$ is in general not convex. However, using Cauchy-Schwarz inequality, it satisfies
\[ \sum_{j,k\in J\times J} \phi_{kj} \nu_k \nu_j \ge -\Big( \sum_{j,k\in J\times J} \phi_{kj}^2 \Big)  \sum_{j\in J}  \nu_j^2\]
so that if $F_j$ is $1$-strongly convex:
\[F_j(t)=\frac{1}{2} t^2 + H_j(t)\]
with $H_j$ convex and 
\begin{equation}  \label{norminter}
\sum_{j,k\in J\times J} \phi_{kj}^2 <1,
\end{equation}
then $E$ is convex  as the sum $E=E_2+E_3$ of the convex quadratic term 
\[E_2(\nu):=\frac{1}{2}  \sum_{j\in J}  \nu_j^2 +  \frac{1}{2} \sum_{k,j\in J\times J}  \phi_{kj} \nu_k \nu_j\]
and the remaining convex congestion term
\[E_3(\nu):=\sum_{j\in J} H_j(\nu_j).\]
In this setting one can write \pref{minicne} as 
\[\inf_{\gamma\in \R_+^{I\times J}} \Big\{\KL(\gamma\vert \gammab)+G_1(\gamma)+G_2(\gamma)+G_3(\gamma)\Big\}\]
where 
\[G_1(\gamma)=\chi_{\{\Lambda_1 (\gamma) =\mu\}}=\begin{cases} 0 \mbox{ if $\Lambda_1( \gamma) =\mu$}\\ +\infty \mbox{ otherwise }\end{cases}\]
and 
\[G_2=\frac{1}{\eps} E_2 \circ \Lambda_2, \; G_3=\frac{1}{\eps} E_3 \circ \Lambda_2.\]
To implement the proximal splitting scheme  \pref{dykitergamma}-\pref{dykiterz} in this case, one has to be able to compute the three proximal maps $\prox^{\KL}_{G_l}$ with $l=1, 2, 3$. The proximal map of $G_1$ corresponds to the fixed marginal constraint $\Lambda_1 (\gamma)=\mu$, it is well-known and it  is given in closed form as:
 \[ \Big(\prox_{G_1}^{\KL}(\theta)\Big)_{ij}=  \frac{ \mu_i \theta_{ij} }   { \sum_{k\in J} \theta_{ik}}.\]
Given $\theta\in \R_+^{I\times J}$, $\gamma:=\prox_{G_2}^{\KL}(\theta)$ is of the form
\[\gamma_{ij}=\theta_{ij} \exp\Big( -\frac{\nu_j+\sum_{k\in J} \phi_{kj} \nu_k}{\eps} \Big)\]
where $\nu$ denotes the second marginal of $\gamma$, so that summing over $i$, $\nu$ is obtained by solving the system:
\[\nu_j=\Big(\sum_{i\in I} \theta_{ij}\Big)  \exp\Big( -\frac{\nu_j+\sum_{k\in J} \phi_{kj} \nu_k}{\eps} \Big)\]
which, when \pref{norminter} holds, can be solved in practice in a few Newton's steps.  The computation of $\gamma:=\prox_{G_3}^{\KL}(\theta)$ is simpler, setting $h_j:=H'_j$ the first-order equation first leads to 
\[\gamma_{ij}=\theta_{ij} \exp\Big( -\frac{h_j(\nu_j)}{\eps} \Big)\]
and the $\nu_j$'s are obtained by solving 
\begin{equation}\label{proxconges}
\nu_j=\Big(\sum_{i\in I} \theta_{ij}\Big)  \exp\Big( -\frac{h_j(\nu_j)}{\eps} \Big)
\end{equation}
which is a separable system of monotone equations, which we shall again solve by Newton's method.

\subsection{A semi-implicit scheme for more general nonconvex cases}\label{sec-semi} 

We now go back to the general case where $E$ is not necessarily convex because of the interaction term given by the symmetric matrix $\phi_{kj}$. Even though there is no theoretical convergence guarantee (but if the following scheme converges, it converges to an equilibrium), the semi-implicit scheme which we now describe gives good results in practice. The idea is simple and consists in replacing the nonconvex interaction term by its linearization. More precisely, we will approximate our initial problem \pref{minicne}:
\begin{equation}
\inf_{\nu\in \PP(Y)} \MK_\eps(\nu)+E(\nu)
\end{equation}
where $E$ is the sum of the convex congestion cost and the nonconvex quadratic interaction cost, by a succession of convex problems, starting from $\nu^{0}\in \PP(Y)$, iteratively solve for $n\ge 1$ 
\begin{equation}\label{semi-imp}
\nu^{(n+1)} = \argmin_{\nu\in \PP(Y)} \MK_\eps(\nu)+E^{(n)}(\nu)
\end{equation}
where  in $E^{(n)}$ we have linearized the interaction term:
\[E^{(n)}(\nu)=\sum_{j\in J} F_j(\nu_j) +\sum_{j\in J} V_j^{(n)} \nu_j, \; V_j^{(n)}:=\sum_{k\in J} \phi_{kj} \nu_k^{(n)}.\]
Of course, we can solve \pref{semi-imp} by the Dykstra proximal-splitting scheme described in the previous paragraph. More precisely, the linear term can be absorbed by the $\KL$ term so that we only have two proximal steps: one corresponding to the (explicit) projection fixed marginal constraint and one corresponding to the congestion cost (corresponding to  \pref{proxconges} using $f_j$ instead of $h_j$).

\section{Numerical results}\label{sec-num-res}
We now present some numerical results in dimension $d=1$ and $d=2$.
As we have pointed out in section \ref{sec-entropic}, the strength of the entropic regularization, and consequently of the Dykstra's algorithm, lies in the fact that we can treat
optimal transportation problems with any transport cost, in particular both concave and convex cost functions can be considered.
Thus, if we consider the cost $c_{ij}=|x_i-y_j|^p$ with $p>0$ (convex cost if $p>1$ and concave otherwise), one can analyze
how the shape of the unknown marginal $\nu$ changes by varying the exponent $p$.
Before showing the results, we want to focus on an other aspect of the entropic regularization, namely diffusion.
Indeed, once we add the entropic term to the optimal transport term, then this regularization spreads the support of the plan $\gamma$ and defines a strongly convex problem with a unique solution.
So it is interesting to see how the support of the optimal $\gamma$ varies by decreasing the parameter $\eps$.
Let us consider the standard quadratic cost $c_{ij}=|x_i-y_j|^2$ and the following energy $E(\nu)$
\begin{equation}
\label{energy1}
    E(\nu)=\sum_{j\in J}\nu_j^8+\frac{1}{2}\sum_{k,j\in J\times J}\phi_{kj}\nu_k\nu_j+\sum |y_j-9|^4,
\end{equation}
where $\phi_{kj}=10^{-4}|y_k-y_j|^2$ and the third term is a confinement potential.
We notice that there is no need to compute a proximal step for the potential, indeed it can be absorbed by the $\KL$ term.
We know that in this case the optimal $\gamma$ (for instance see \cite{abgcmor}) is a pure Cournot-Nash equilibrium, which actually means that $\gamma$ has the form $\gamma_T=({\mathrm{id}},T)_\#\mu$
where $T$ is the optimal map.
In Figure \ref{fig1} we plot the support of the optimal $\gamma$ and its marginal $\nu$ for different values of $\eps$.
As expected the support of the regularized $\gamma$ concentrates on the graph of $T$ as $\eps$ decreases.

\begin{figure}[htbp]

\begin{tabular}{@{}c@{\hspace{0.5mm}}c@{\hspace{0.5mm}}c@{\hspace{0.5mm}}c@{\hspace{0.5mm}}c@{\hspace{0.5mm}}@{}}

\centering
\includegraphics[ scale=0.245]{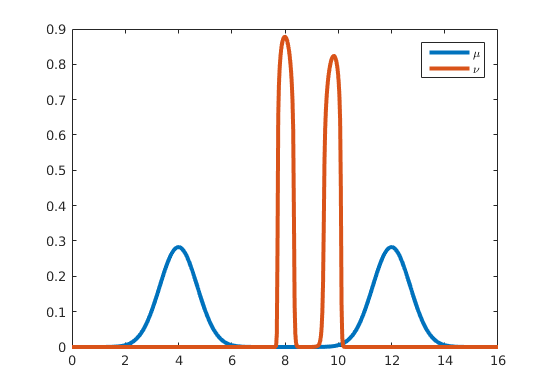}&
\includegraphics[ scale=0.23]{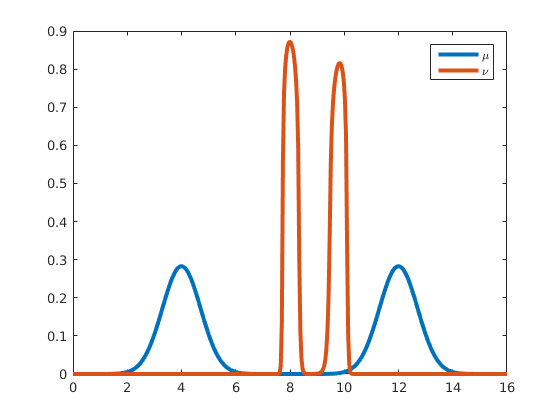}&
\includegraphics[ scale=0.23]{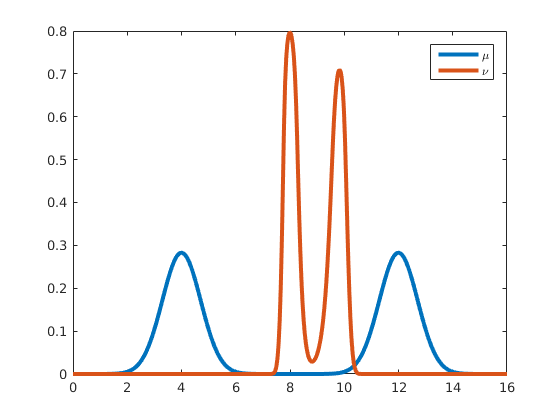}&

\includegraphics[ scale=0.23]{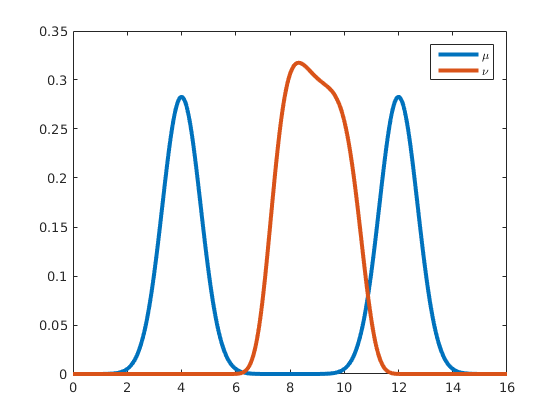}&
\\
$\eps=0.05$ & $\eps=0.1$ & $\eps=0.5$ & $\eps=10$\\
\includegraphics[ scale=0.1]{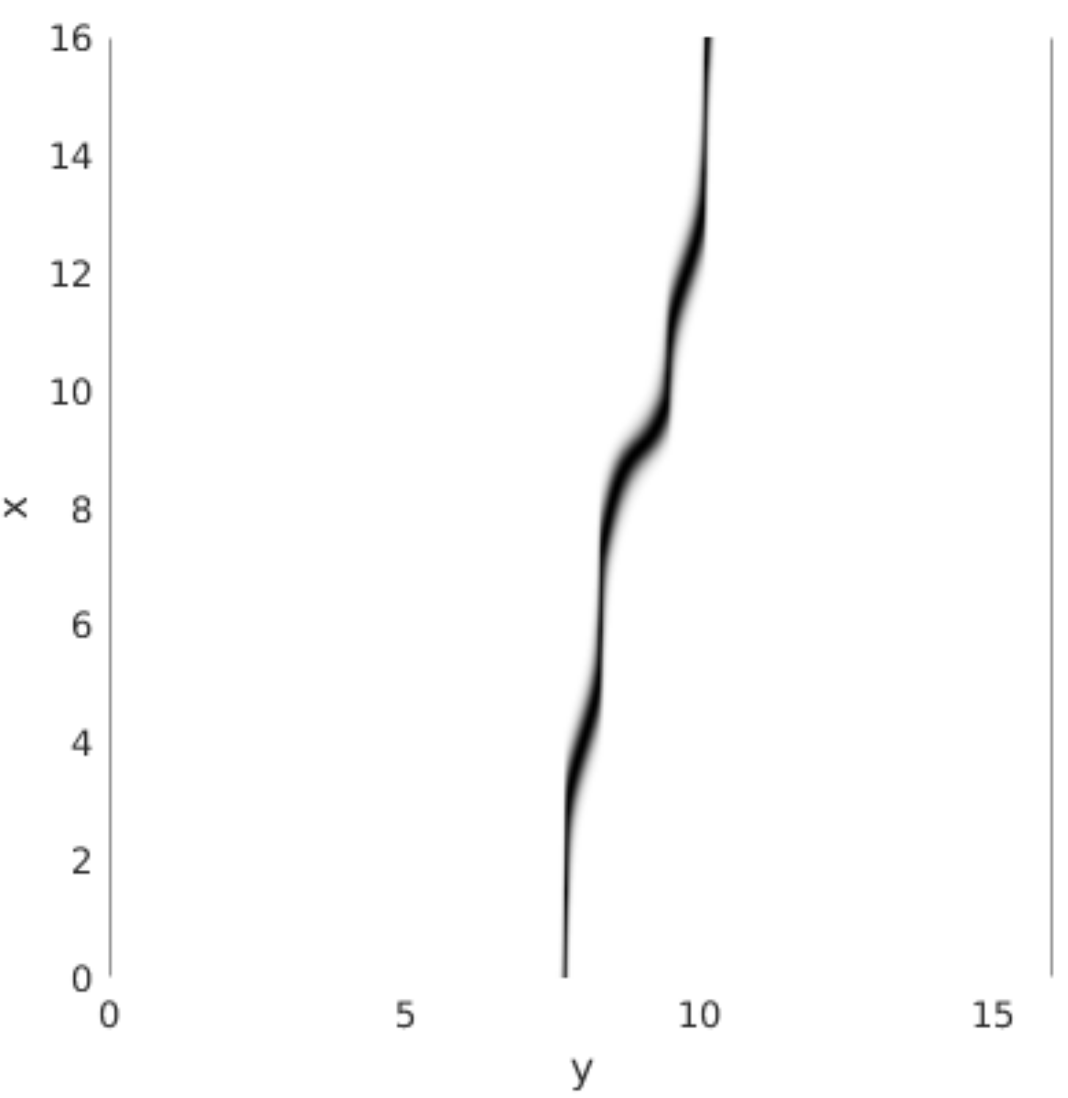}&
\includegraphics[ scale=0.105]{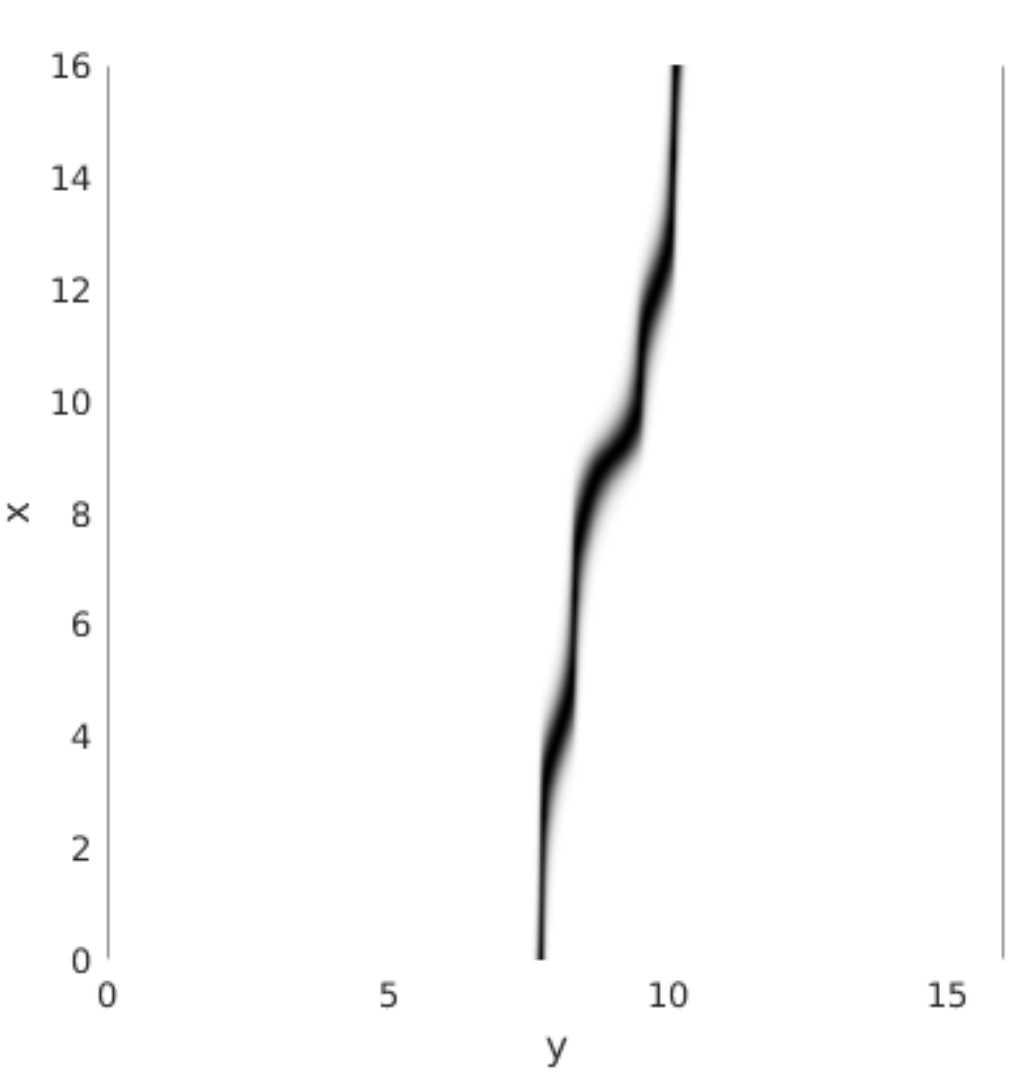}&
\includegraphics[ scale=0.1]{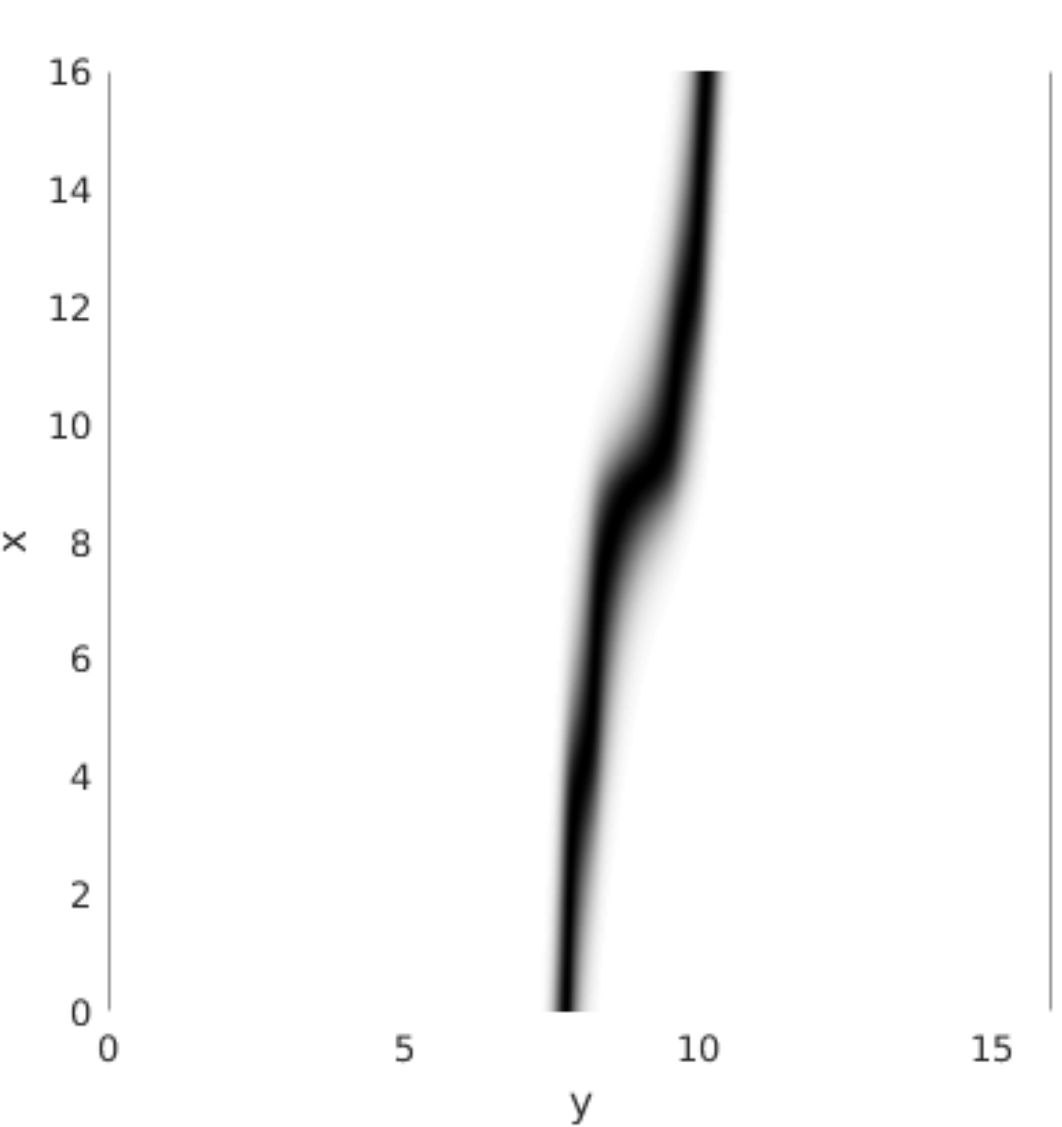}&

\includegraphics[ scale=0.1]{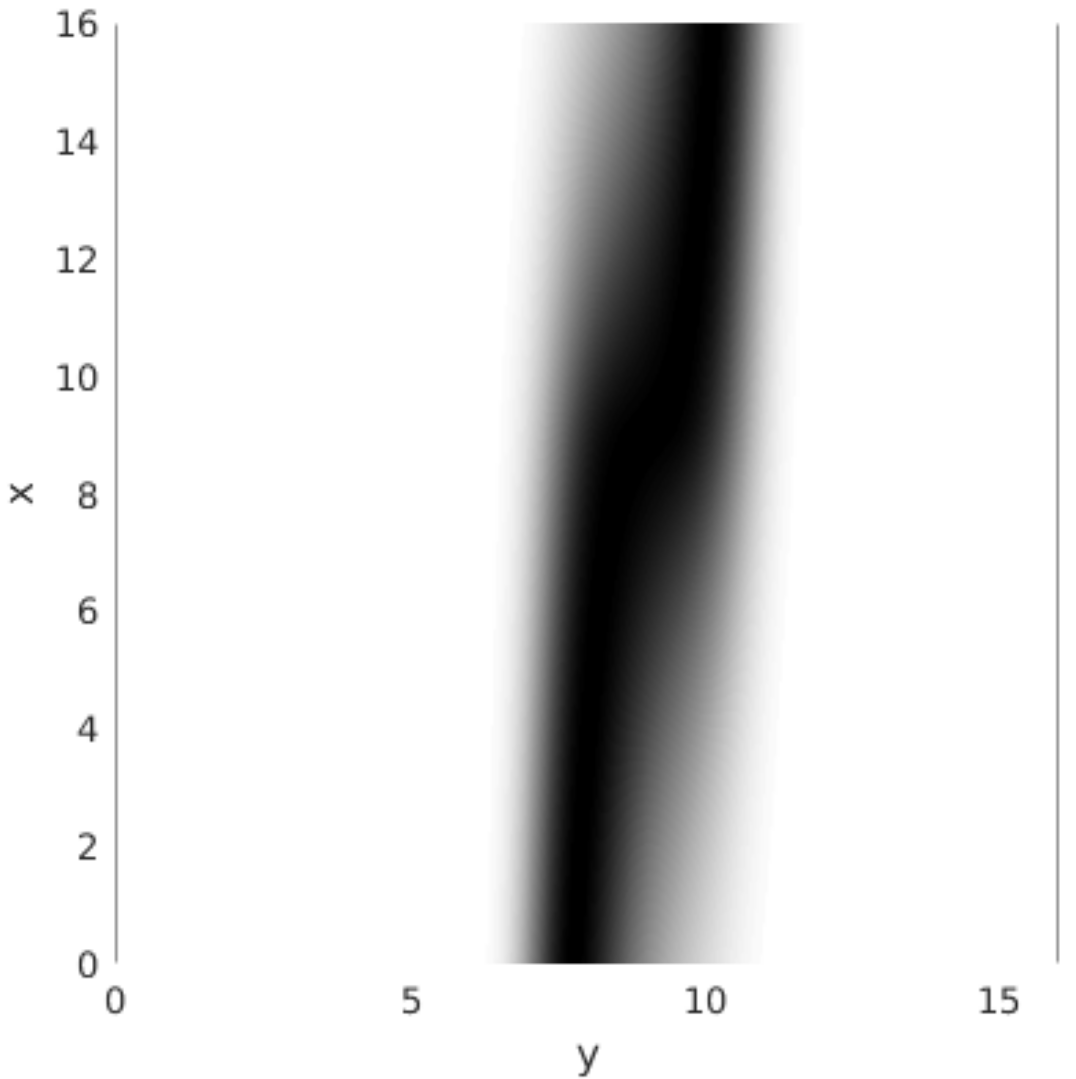}&
\\
$\eps=0.05$ & $\eps=0.1$ & $\eps=0.5$ &  $\eps=10$\\

\end{tabular}
\caption{\textit{Top: The initial distribution $\mu$ (blue solid line) and the solution $\nu$ (red solid line) for $\eps\in\{0.05,0.1,,0.5,10\}$.
         Bottom: The support of $\gamma$ for $\eps\in\{0.05,0.1,,0.5,10\}$.}}
\label{fig1}
\end{figure}
In Section \ref{sec-semi}, we have pointed out that a semi-implicit approach can be applied in order to treat an energy $E$ which is not convex.
We want, now, to compare the performances of the implicit and the semi-implicit approach in terms of \textit{CPU time} and \textit{number of iterations} when 
$\eps$ varies. By looking at  Figure \ref{fig2}, we notice the number of iterations, as well as the CPU time, of the semi-implicit approach are smaller than
the ones for the implicit approach. This is quite obvious as in the semi-implicit scheme, the interaction term can be absorbed by the $\KL$ term so that one has to compute only two proximal steps instead of three. 

\begin{figure}[htbp]
\label{fig2}
\begin{tabular}{@{}c@{\hspace{1.5mm}}c@{\hspace{1.5mm}}@{}}

\centering
\includegraphics[ scale=0.47]{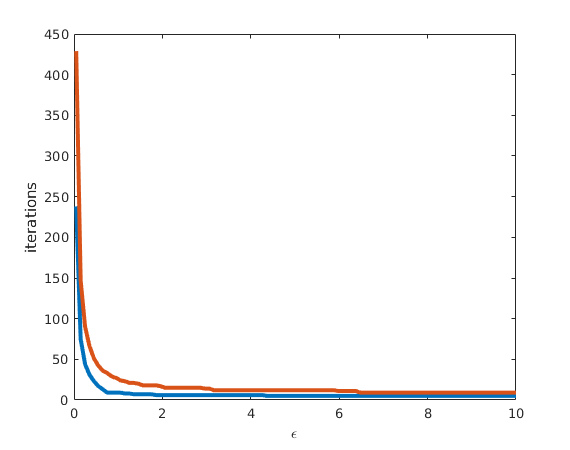}&
\includegraphics[ scale=0.5]{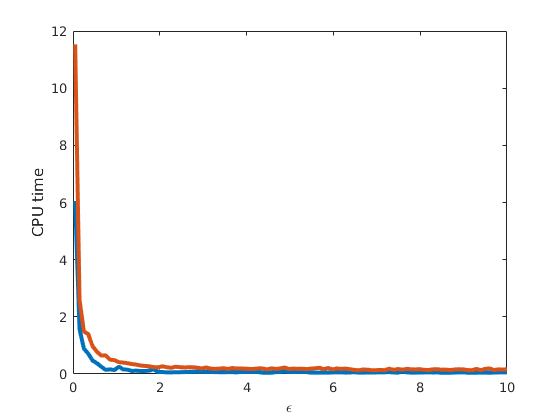}
\\
iterations & CPU time in seconds \\

\end{tabular}
\caption{\textit{Left: the number of iterations for the semi-implicit (blue) and for the implicit (red).
         Right: CPU time for the semi-implicit (blue) and for the implicit (red).}}
\label{fig2}
\end{figure}

\subsection{Dimension one}
Let us first consider the one-dimensional case.
One of the main advantages of the scheme we have proposed is that we can consider any kind of cost function.
Thus, take  $c_{ij}=|x_i-y_j|^p$ and the energy $E$ given by (\ref{energy1}), then we want to visualize the optimal $\nu$ as
$p\in(0,M]$ with $M$ large.
For the simulations in Figure \ref{fig1D_1},  we have used a $N=500$ grid points discretization of $[0,16]$ and we have treated the interaction term with a semi-implicit approach.
Then, we have chosen the smallest $\eps$ possible for each cost function tested.
As one can notice for $p\leq 1$ the optimal $\nu$ has a connected support whereas for $p>1$, the support of $\nu$ is closer to the one of $\mu$.
Finally, we obtain an optimal $\nu$ which tends to be concentrated near $y=9$ due to the external potential, except for large $p$ where the optimal transport term
becomes \textit{dominant} so that the second marginal $\nu$ tends  to be \textit{close}  to the initial distribution $\mu$.  

\begin{figure}[htbp]
\label{fig1D_1}
\centering
\advance\leftskip-2cm
\begin{tabular}{c@{\hspace{0.5mm}}c@{\hspace{0.5mm}}c@{\hspace{0.5mm}}}

\centering
\includegraphics[ scale=0.3]{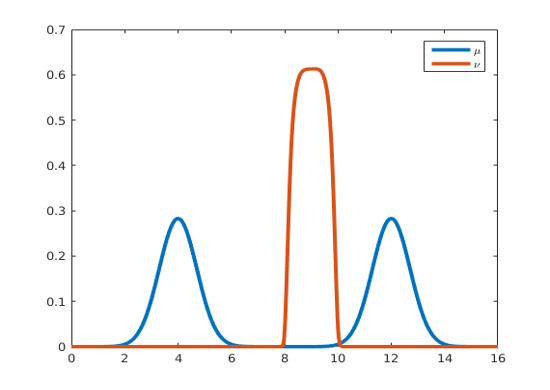}&
\includegraphics[ scale=0.3]{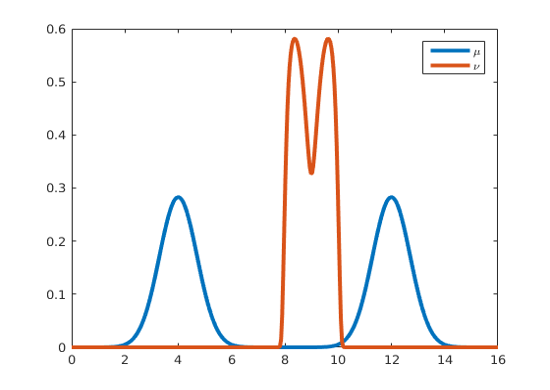}&
\includegraphics[ scale=0.4]{1D_2bumps_pow8_quadratic_int_p2.png}\\
$p=0.1$ & $p=1$ & $p=2$\\
\includegraphics[ scale=0.4]{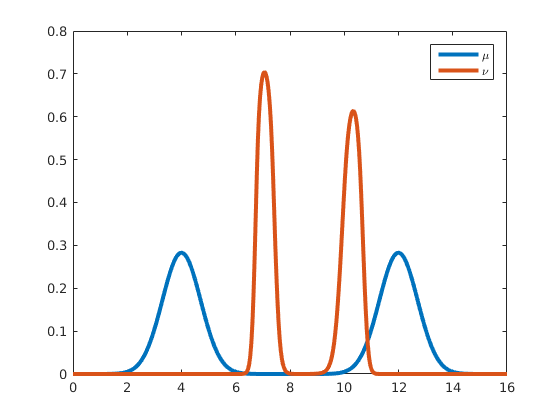}&
\includegraphics[ scale=0.4]{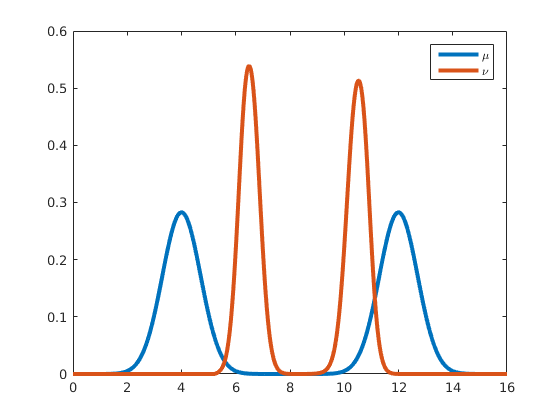}&
\includegraphics[ scale=0.4]{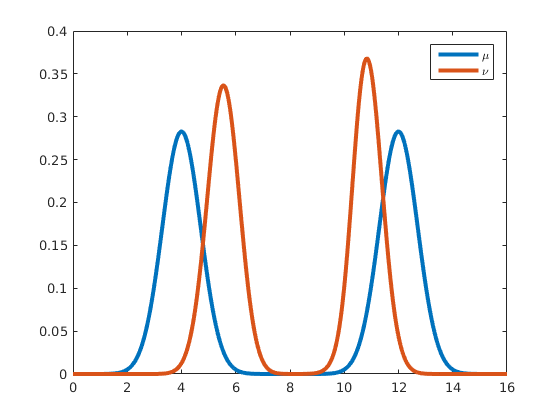}\\
$p=3$& $p=4$&  $p=8$\\
\includegraphics[ scale=0.4]{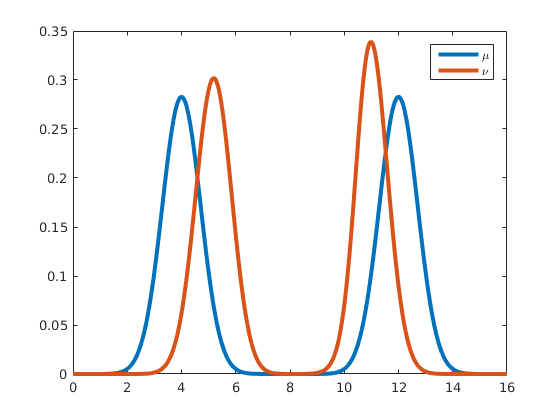}&
\includegraphics[ scale=0.4]{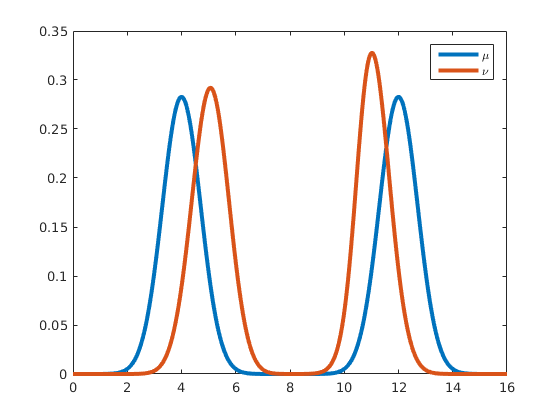}&
\includegraphics[ scale=0.4]{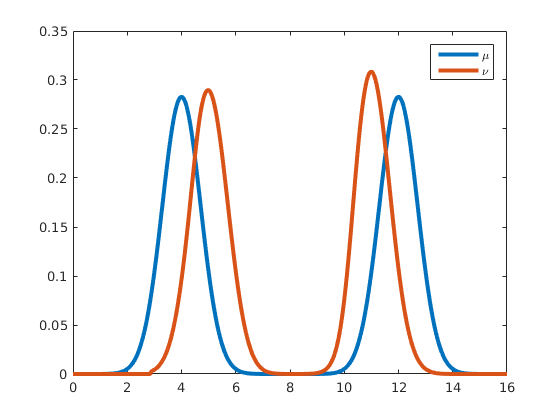}\\
$p=16$& $p=32$&  $p=64$\\

\end{tabular}
\caption{\textit{The initial distribution $\mu$, a sum of two translated Gaussian, (blue solid line) and the solution $\nu$ (red solid line) for $p\in\{0.1,1,2,3,4,8,16,32,64\}$.}}
\label{fig1D_1}
\end{figure}



Let us now consider an energy $E$ given by
\begin{equation}
E(\nu)= \sum_{j\in J} \ln(\nu_j)+\sum_{k,j\in J\times J}\phi_{kj}\nu_k\nu_j +\sum_{j\in J}(y_j-5)^3
\end{equation}
where $\phi_{kj}$ is a cubic interaction $\phi_{kj}=10^{-4}|x_i-y_j|^3$. 
The simulations are presented in Figures \ref{fig1D_2} and \ref{fig1D_3} for different initial distribution: a uniform density on $[0,1]$ and the sum of two translated Gaussians, respectively.
For both the numerical experiments we have used  $N=500$ grid points discretization of $[0,10]$ and treated the interaction term with a semi-implicit approach.
One can observe, as in the previous case, that the structure of the optimal $\nu$  becomes close to the one of the initial ditribution as $p$ increases. 

\begin{figure}[htbp]
\label{fig1D_2}
\advance\leftskip-2cm
\begin{tabular}{c@{\hspace{0.5mm}}c@{\hspace{0.5mm}}c@{\hspace{0.5mm}}}

\centering
\includegraphics[ scale=0.4]{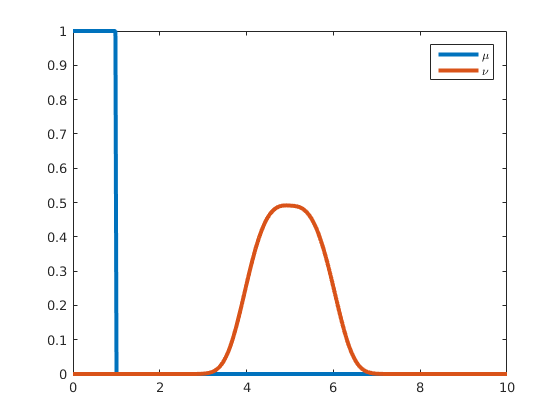}&
\includegraphics[ scale=0.3]{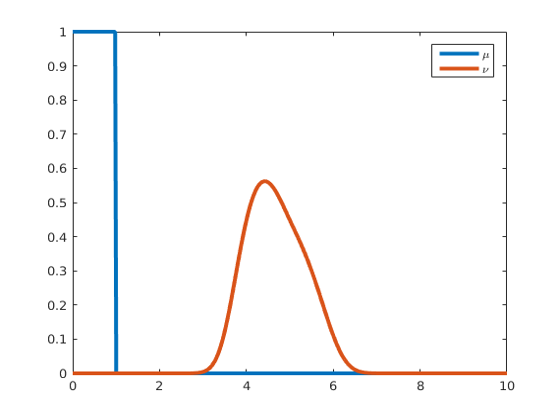}&
\includegraphics[ scale=0.4]{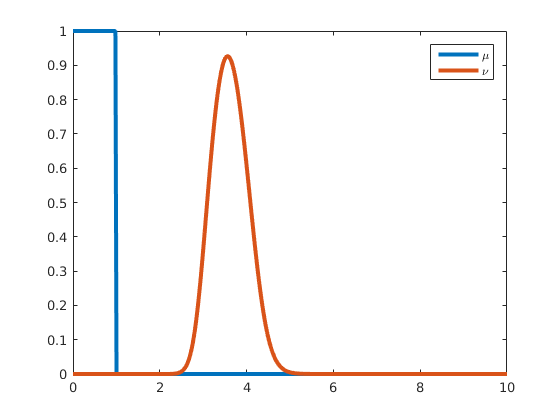}\\
$p=0.1$ & $p=1$ & $p=2$\\
\includegraphics[ scale=0.4]{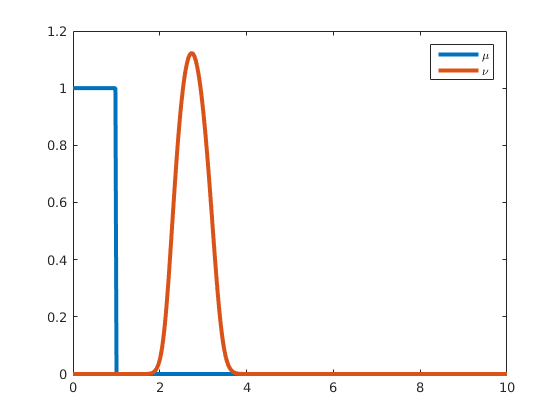}&
\includegraphics[ scale=0.4]{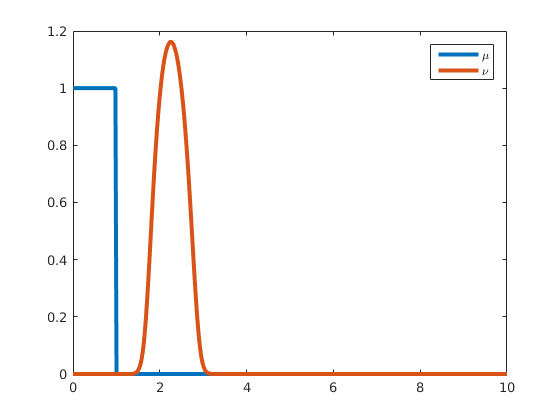}&
\includegraphics[ scale=0.3]{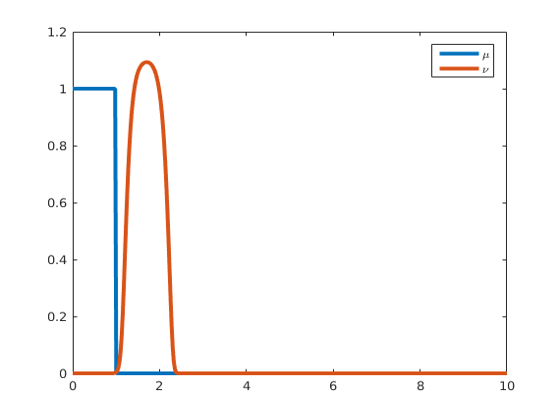}\\
$p=3$& $p=4$&  $p=8$\\
\includegraphics[ scale=0.4]{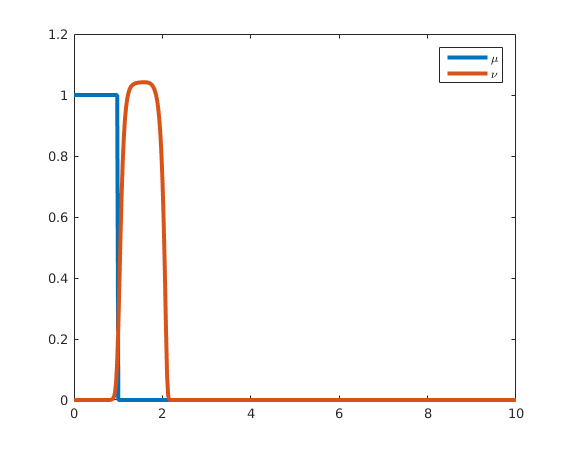}&
\includegraphics[ scale=0.4]{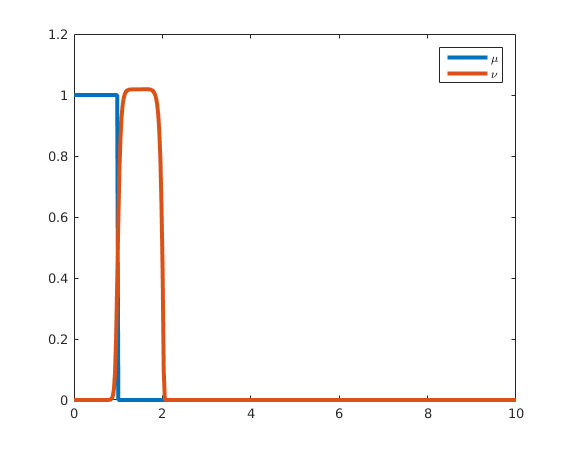}&
\includegraphics[ scale=0.4]{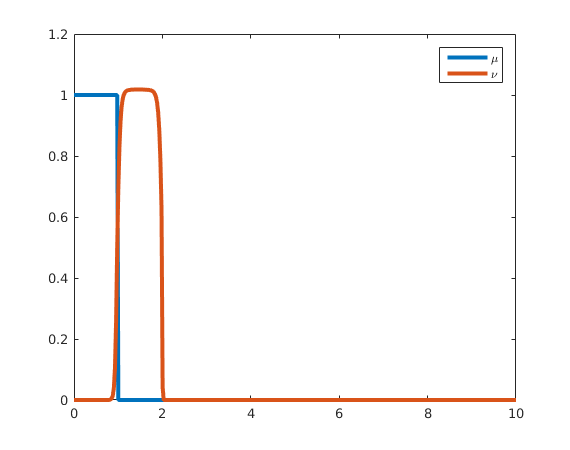}\\
$p=16$& $p=32$&  $p=64$\\

\end{tabular}
\caption{\textit{The initial distribution $\mu$, a uniform density on $[0,1]$, (blue solid line) and the solution $\nu$ (red solid line) for $p\in\{0.1,1,2,3,4,8,16,32,64\}$.}}
\label{fig1D_2}
\end{figure}

\begin{figure}[htbp]
\label{fig1D_3}
\advance\leftskip-2cm
\begin{tabular}{@{}c@{\hspace{0.5mm}}c@{\hspace{0.5mm}}c@{\hspace{0.5mm}}@{}}

\centering
\includegraphics[ scale=0.4]{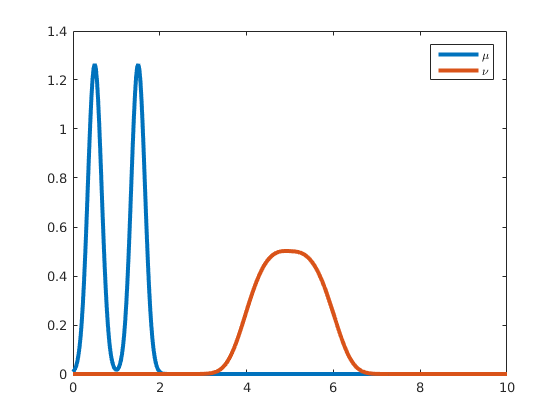}&
\includegraphics[ scale=0.4]{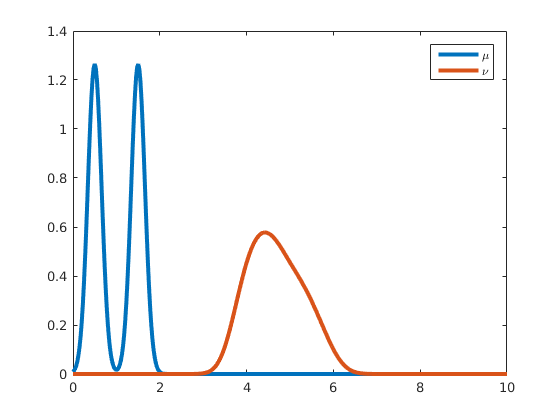}&
\includegraphics[ scale=0.4]{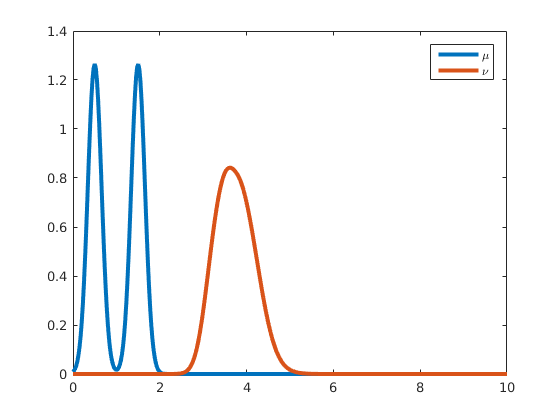}\\
$p=0.1$ & $p=1$ & $p=2$\\
\includegraphics[ scale=0.4]{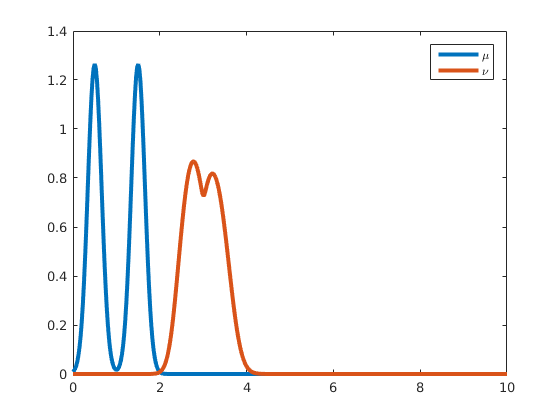}&
\includegraphics[ scale=0.4]{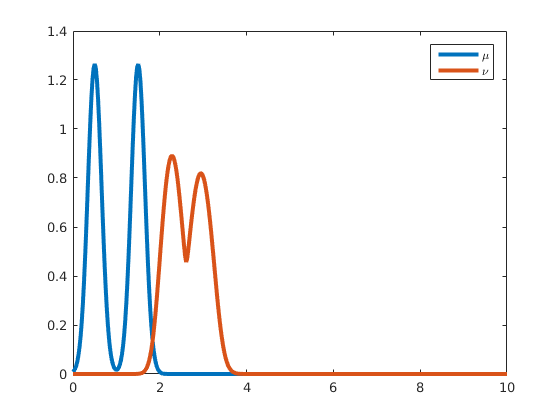}&
\includegraphics[ scale=0.4]{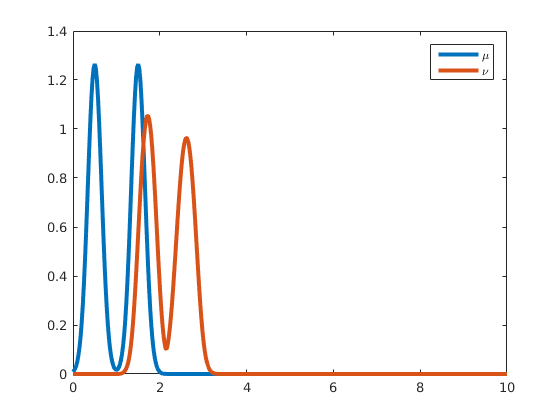}\\
$p=3$& $p=4$&  $p=8$\\
\includegraphics[ scale=0.4]{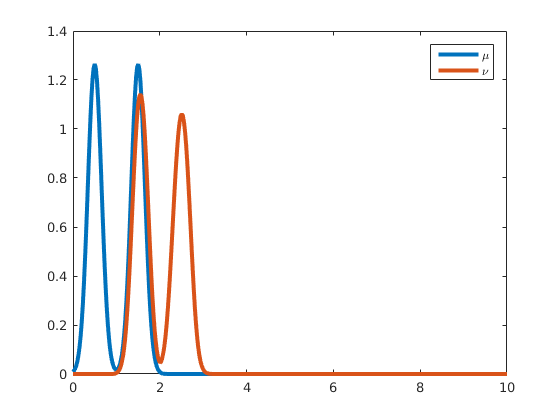}&
\includegraphics[ scale=0.4]{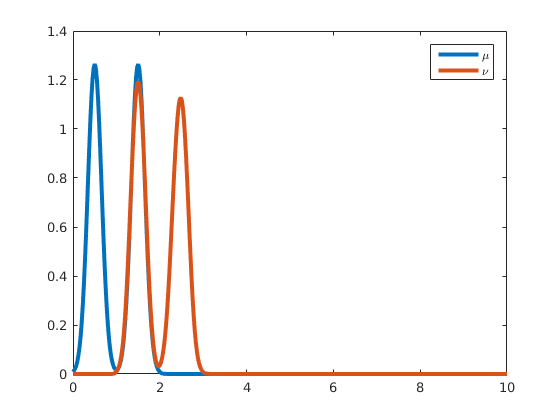}&
\includegraphics[ scale=0.4]{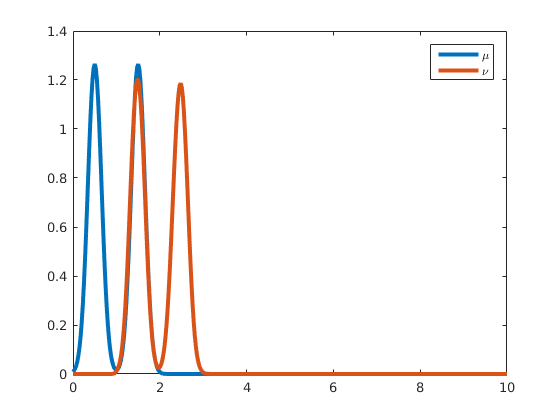}\\
$p=16$& $p=32$&  $p=64$\\

\end{tabular}
\caption{\textit{The initial distribution $\mu$, a sum of two translated Gaussians, (blue solid line) and the solution $\nu$ (red solid line) for $p\in\{0.1,1,2,3,4,8,16,32,64\}$.}}
\label{fig1D_3}
\end{figure}

\subsection{Dimension two} 

For the $2d$ case, we always take $c(x,y)=\|x-y\|^p$, a congestion of the form $F_j(\nu_j)=\nu_j^8$, quadratic interactions $\phi_{kj}=10^{-4}\|y_k-y_j\|^2$ and 
a potential $v_j=\|y_j-3\|^4$.
The simulations in Figure \ref{fig5} are obtained by using a $N\times N$, with $N=80$, discretization of $[0,5]^2$ and by treating the interaction term with a semi-implicit approach.
As in the $1-$dimensional case, we notice the same effect on the support of $\nu$ when we make $p$ vary.
\begin{figure}[htbp]
\label{fig5}
\advance\leftskip-3.5cm
\begin{tabular}{@{}c@{\hspace{0.5mm}}c@{\hspace{0.5mm}}c@{\hspace{0.5mm}}c@{\hspace{0.5mm}}@{}}

\centering
\includegraphics[ scale=0.28]{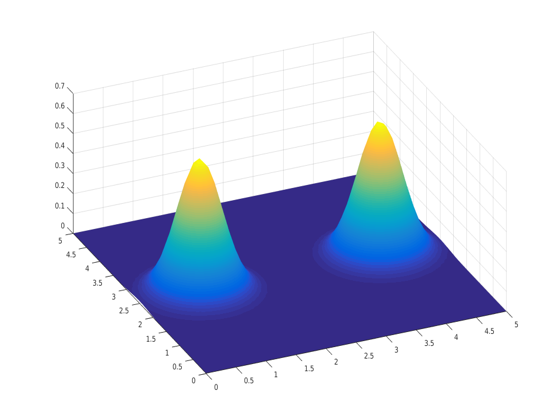}&
\includegraphics[ scale=0.32]{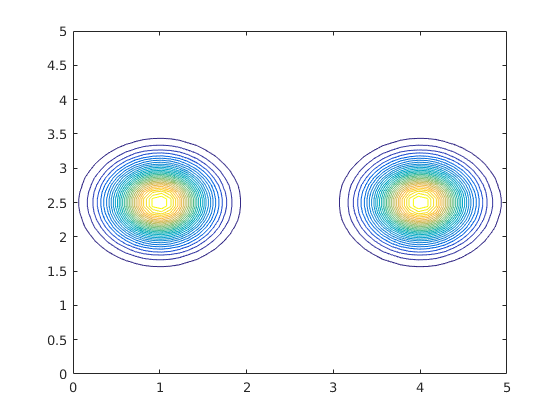}&
\includegraphics[ scale=0.28]{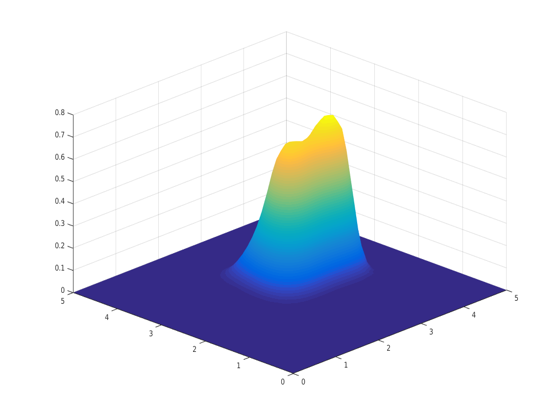}&
\includegraphics[ scale=0.32]{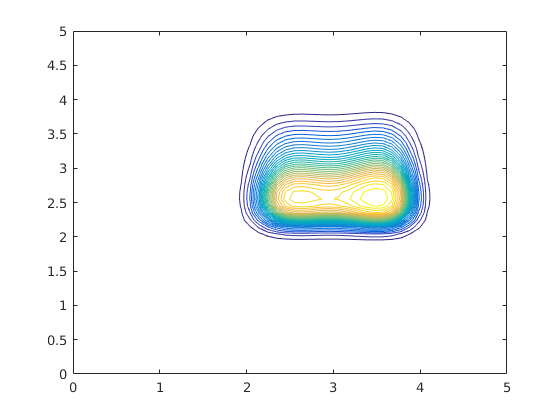}\\
surface plot  of $\mu$ & support of $\mu$ & surface plot of $\nu$ for $p=0.5$ $$ & support of $\nu$ for $p=0.5$\\

\includegraphics[ scale=0.28]{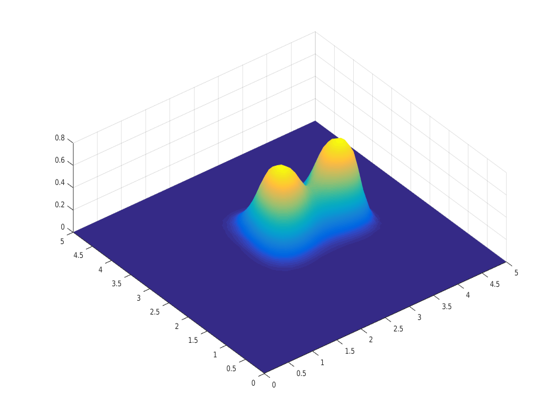}&
\includegraphics[ scale=0.32]{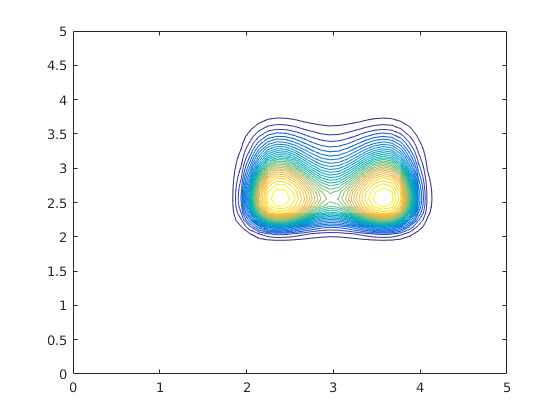}&
\includegraphics[ scale=0.28]{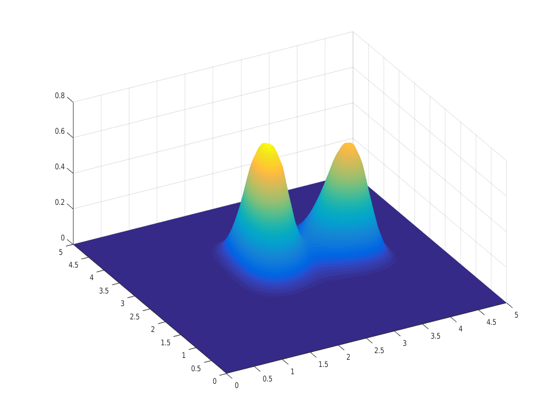}&
\includegraphics[ scale=0.32]{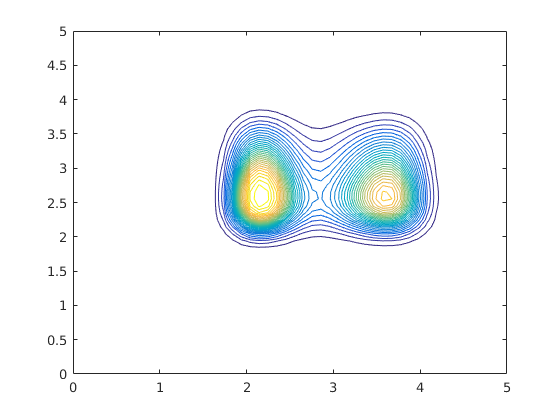}\\
surface plot  of $\nu$ for $p=1$ $$ & support of $\nu$ for $p=1$ &surface plot of $\nu$ for $p=2$ $$ & support of $\nu$ for $p=2$\\
\end{tabular}
\begin{tabular}{@{}c@{\hspace{0.5mm}}c@{\hspace{0.5mm}}@{}}
\includegraphics[ scale=0.28]{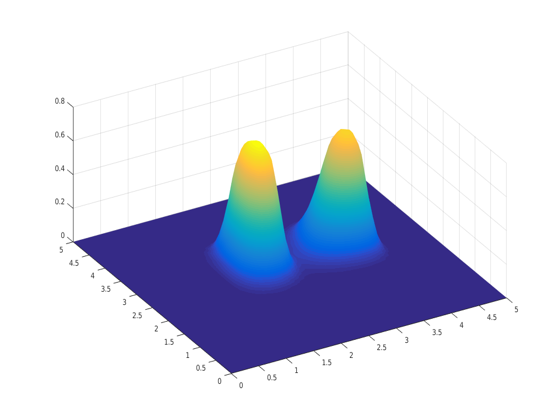}&
\includegraphics[ scale=0.32]{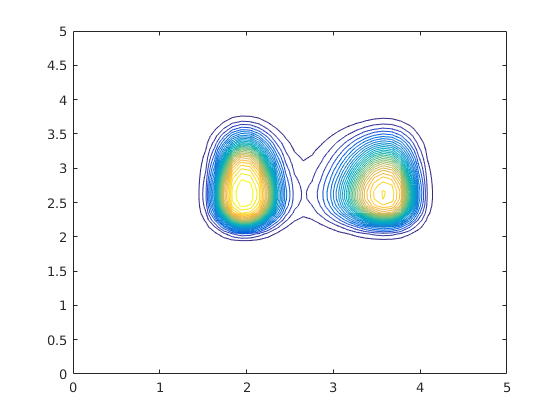}\\
surface plot  of $\nu$ for $p=4$ $$ & support of $\nu$ for $p=4$\\

\end{tabular}
\caption{\textit{The initial distribution $\mu$, a sum of two translated Gaussian, and the solution $\nu$ for different values of $p$.}}
\label{fig5}
\end{figure}

\section{Extension to several populations}\label{sec-several-pop}

\subsection{A class of two-populations models}

We end the paper by briefly explaining how our approach can easily be extended to the case of several populations of players. For the sake of simplicity, we take the two-populations case and assume that these two populations interact through a congestion term. More precisely, we are given two finite type spaces $X_1=\{x_i^1\}_{i\in I_1}$ and $X_2=\{x_i^2\}_{i\in I_2}$, a common strategy space $Y=\{y_j\}_{j\in J}$, given distributions of the players types $\mu^1\in \PP(X_1)$, $\mu^2\in \PP(X_2)$, two transport cost matrices $c^1\in \R^{I_1\times J}$, $c^2\in \R^{I_2\times J}$, and consider the minimization problem:
\begin{equation}
\inf_{(\nu^1, \nu^2)\in \PP(Y)\times \PP(Y)}\Big\{ \MK^1_{\eps_1}(\nu^1)+ \MK^2_{\eps_2}(\nu^2)+   E_1(\nu^1)+E_2(\nu^2)+ F(\nu^1+\nu^2)\Big\}
\end{equation}
where for $l=1,2$, $\eps_l>0$ is a regularization (or noise) parameter, $\MK^l_{\eps_l}(\nu^l)$ represents the regularized transport cost:
\[\MK^l_{\eps_l}(\nu^l):=\inf_{\gamma \in \Pi(\mu^l, \nu^l)}  \Big\{ c^l\cdot \gamma + \eps_l \sum_{i,j\in I_l\times J} \gamma_{ij} (\ln(\gamma_{ij})-1)\Big\},\]
$E_l(\nu^l)$ represents an individual cost for population $k$, for instance, an interaction cost:
\[E_l(\nu^l):=\sum_{j,k\in J\times J} \phi_{kj}^l \nu_j^l \nu_k^l\]
 and $F$ is a total congestion cost
 \[F(\nu^1+\nu^2):=\sum_{j\in J} F_j (\nu^1_j+\nu^2_j)\]
where $F_j$ is convex.
\begin{rem} 
The proximal step related to $F$ can be computed as in (\ref{proxconges}) by taking $\nu_j=\nu_j^1+\nu_j^2$.
\end{rem}
\subsection{Numerical Results}
For the two populations case, we take the following energies $E_l$
\[E_l(\nu^l)=\sum_{j\in J}(\nu_j^l)^8+\sum_{k,j\in J\times J}\phi_{kj}^l\nu_j^l \nu_k^l+\sum_{j\in J}|y_j-10|^4 ,\]
where $\phi_{kj}^l=10^{-4}|y_k-y_j|^2$ and the total congestion $F_j$ is given by
\[ F_j(\nu_j^1+\nu_j^2)=(\nu_j^1+\nu_j^2)^4.\]
As usual, we consider cost functions of the form $c_{ij}=|x_i-y_j|^p$ and we want to analyze the support of $\nu^l$ as $p$ varies.
For the simulations in Figure \ref{fig1_2pop} we have used  $N=500$ grid points discretization of $[0,16]$ and treated the interaction term with a semi-implicit approach.
As we can notice in Figure \ref{fig1_2pop} there is a \textit{competition} between the confinement potential and the total congestion: the two populations tend to concentrated near $y=10$ by the potential, but the effect of the congestion term makes it costly. This becomes clear if we compare (for instance, the case with $p=2$) $\nu^1$ with the optimal one in Figure \ref{fig1D_1};
even if the energies are the same, the effect of  congestion makes the support of the optimal solutions quite different.

\begin{figure}[htbp]
\label{fig1_2pop}
\begin{tabular}{@{}c@{\hspace{1.5mm}}c@{\hspace{1.5mm}}@{}}

\centering
\includegraphics[ scale=0.4]{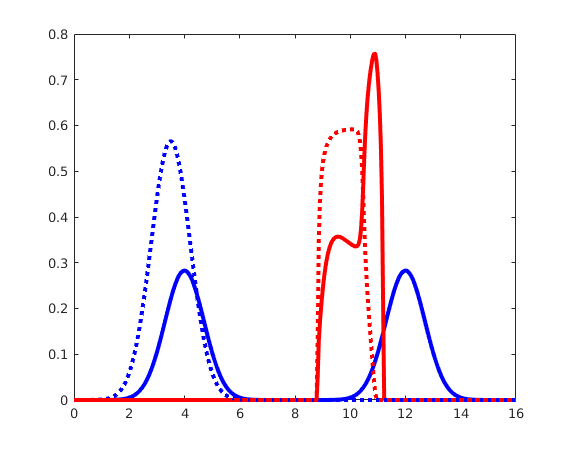}&
\includegraphics[ scale=0.4]{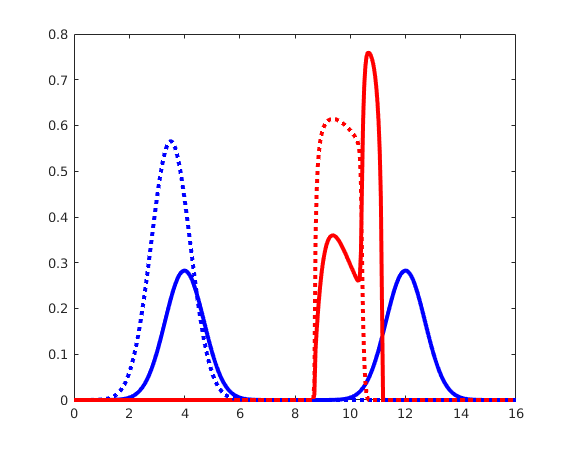}\\
$p=0.5$ & $p=1$\\
\includegraphics[ scale=0.43]{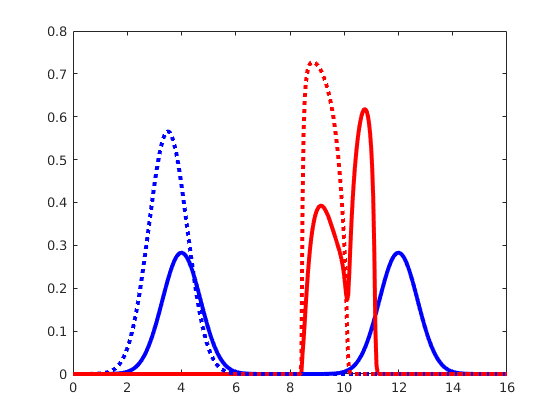}&
\includegraphics[ scale=0.4]{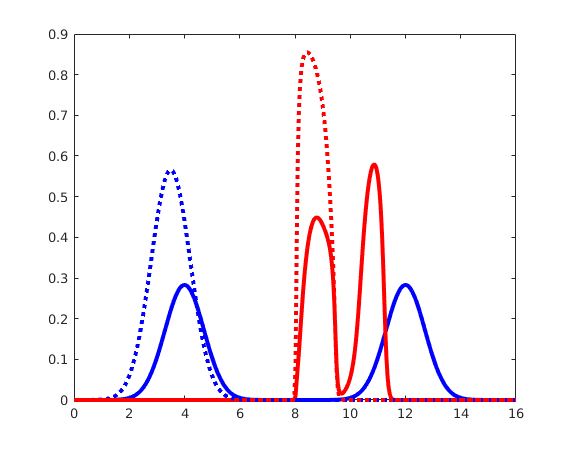}\\
$p=1.5$  & $p=2$\\

\end{tabular}
\caption{\textit{The initial distributions $\mu^1$ and $\mu^2$ (blue solid line and blue dotted line) and the solutions $\nu^1$ and $\nu^2$ (red solid line and red dotted line) for different values of $p$.}}
\label{fig1_2pop}
\end{figure}
Thus, let us now consider the following case: let $E_l$ be as above and $p=2$, then we take the total congestion given by
\[ F_j(\nu_j^1+\nu_j^2)=(\nu_j^1+\nu_j^2)^r\]
and we compute the optimal $\nu^l$ for different values of $r$.
In Figure \ref{fig2_2pop} we can see that the congestion term becomes more dominant as $r$ increases so that the two populations try to be as far as possible, despite the effect of the confinement potential which is minimal at $y=10$.
\begin{figure}[htbp]
\label{fig2_2pop}
\begin{tabular}{@{}c@{\hspace{0.5mm}}c@{\hspace{0.5mm}}c@{\hspace{0.5mm}}@{}}

\centering
\includegraphics[ scale=0.3]{1D_2bumps_1bump_pow8_quadratic_int_p2_2species.png}&
\includegraphics[ scale=0.3]{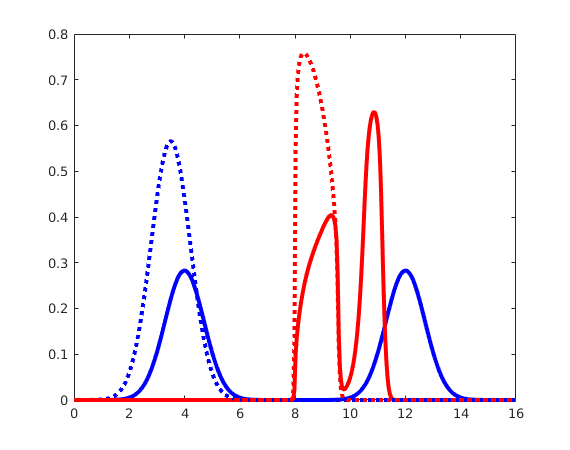}&
\includegraphics[ scale=0.3]{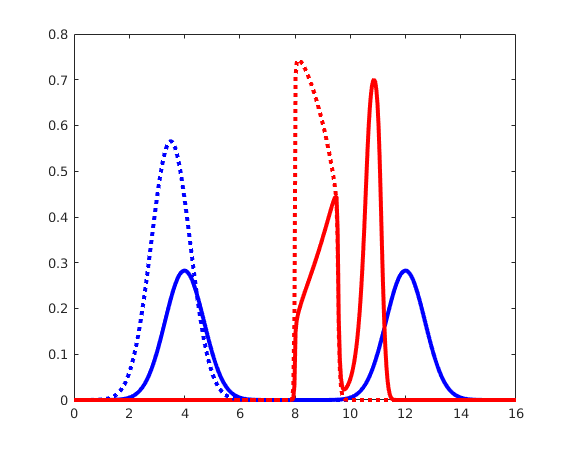}\\
$r=4$& $r=8$ & $r=32$\\

\end{tabular}
\caption{\textit{The initial distributions $\mu^1$ and $\mu^2$ (blue solid line and blue dotted line) and the solutions $\nu^1$ and $\nu^2$ (red solid line and red dotted line) for different values of $r$.}}
\label{fig2_2pop}
\end{figure}


{\bf{Acknowledgements:}}   G.C. and L.N. gratefully acknowledge the support from the ANR, through the project ISOTACE (ANR-12-
MONU-0013).

\bibliographystyle{plain}
\bibliography{bibli}

\begin{thebibliography}{10}

\bibitem{AmbrosioGradientFlows2008}
L.~Ambrosio, N.~Gigli, and G.~Savar\'e.
\newblock {\em Gradient Flows in Metric Spaces and in the Space of Probability
  Measures}.
\newblock Lectures in Mathematics. Birkh\"auser Boston, 2nd edition, 2004.

\bibitem{Aumann2}
R.~Aumann.
\newblock Existence of competitive equilibria in markets with a continuum of
  traders.
\newblock {\em Econometrica}, 32:39--50, 1964.

\bibitem{Aumann}
R.~Aumann.
\newblock Markets with a continuum of traders.
\newblock {\em Econometrica}, 34:1--17, 1966.

\bibitem{bauschke-lewis}
H.~H. Bauschke and A.~S. Lewis.
\newblock Dykstra's algorithm with {B}regman projections: a convergence proof.
\newblock {\em Optimization}, 48(4):409--427, 2000.

\bibitem{benamouetalentropic}
Jean-David Benamou, Guillaume Carlier, Marco Cuturi, Luca Nenna, and Gabriel
  Peyr{\'e}.
\newblock Iterative {B}regman projections for regularized transportation
  problems.
\newblock {\em SIAM J. Sci. Comput.}, 37(2):A1111--A1138, 2015.

\bibitem{abgcptrl}
Adrien Blanchet and Guillaume Carlier.
\newblock From {N}ash to {C}ournot-{N}ash equilibria via the
  {M}onge-{K}antorovich problem.
\newblock {\em Philos. Trans. R. Soc. Lond. Ser. A Math. Phys. Eng. Sci.},
  372(2028):20130398, 11, 2014.

\bibitem{abgcmor}
Adrien Blanchet and Guillaume Carlier.
\newblock Optimal transport and {C}ournot-{N}ash equilibria.
\newblock {\em Math. Oper. Res.}, 41(1):125--145, 2016.

\bibitem{Chizat}
L.~Chizat, G.~Peyr\'e, B.~Schmitzer, and F.-X. Vialard.
\newblock Scaling algorithms for unbalanced transport problems.
\newblock Technical report, http://arxiv.org/abs/1607.05816, 2016.

\bibitem{CuturiSinkhorn}
M.~Cuturi.
\newblock Sinkhorn distances: Lightspeed computation of optimal transport.
\newblock In {\em Advances in Neural Information Processing Systems (NIPS) 26},
  pages 2292--2300, 2013.

\bibitem{Dykstra83}
R.~L. Dykstra.
\newblock An algorithm for restricted least squares regression.
\newblock {\em J. Amer. Stat.}, 78(384):839--842, 1983.

\bibitem{galichonbook}
A.~Galichon.
\newblock {\em Optimal Transport Methods in Economics}.
\newblock Princeton University Press, 1st edition, 2016.

\bibitem{Galichon-Entropic}
A.~Galichon and B.~Salani\'e.
\newblock Matching with trade-offs: Revealed preferences over competing
  characteristics.
\newblock Technical report, Preprint SSRN-1487307, 2009.

\bibitem{JKO1998}
R.~Jordan, D.~Kinderlehrer, and F.~Otto.
\newblock The variational formulation of the {F}okker-{P}lanck equation.
\newblock {\em SIAM J.~Math.~Anal.}, 29(1):1--17, 1998.

\bibitem{lw1}
M.~LeBreton and S.~Weber.
\newblock Games of social interactions with local and global externalities.
\newblock {\em Econ. Letters}, 111:88--90, 2011.

\bibitem{MasColell}
A.~Mas-Colell.
\newblock On a theorem of {S}chmeidler.
\newblock {\em J. Math. Econ.}, 3:201--206, 1984.

\bibitem{peyregradientflows}
Gabriel Peyr{\'e}.
\newblock Entropic approximation of {W}asserstein gradient flows.
\newblock {\em SIAM J. Imaging Sci.}, 8(4):2323--2351, 2015.

\bibitem{santambook}
Filippo Santambrogio.
\newblock {\em Optimal transport for applied mathematicians}.
\newblock Progress in Nonlinear Differential Equations and their Applications,
  87. Birkh\"auser/Springer, Cham, 2015.
\newblock Calculus of variations, PDEs, and modeling.

\bibitem{Schmeidler}
D.~Schmeidler.
\newblock Equilibrium points of nonatomic games.
\newblock {\em J. Stat. Phys.}, 7:295--300, 1973.

\bibitem{Villani-TOT2003}
C.~Villani.
\newblock {\em Topics in Optimal Transportation}, volume~58 of {\em Graduate
  Studies in Mathematics}.
\newblock American Mathematical Society, Providence, 2003.

\end{thebibliography}

\end{document}